\newcommand\observation{\begin{Observation}}
\newcommand\beq{\begin{equation}}
\newcommand\eeq{\end{equation}}
\newcommand\bce{\begin{center}}
\newcommand\ece{\end{center}}
\newcommand\bea{\begin{eqnarray}}
\newcommand\eea{\end{eqnarray}}
\newcommand\ba{\begin{array}}
\newcommand\ea{\end{array}}
\newcommand\ben{\begin{enumerate}}
\newcommand\een{\end{enumerate}}
\newcommand\bit{\begin{itemize}}
\newcommand\eit{\end{itemize}}
\newcommand\brr{\begin{array}}
\newcommand\err{\end{array}}
\newcommand\bt{\begin{tabular}}
\newcommand\et{\end{tabular}}
\renewcommand\S{{\mathcal S}}
\DeclareMathOperator\maj{maj}
\DeclareMathOperator\inv{inv}
\DeclareMathOperator\SYT{SYT}
\newcommand\x{{\mathbf x}}
\newcommand{\bbn}{\mathbb{N}}
\newcommand{\bbq}{\mathbb{Q}}
\newcommand{\F}{\mathcal{F}}
\DeclareMathOperator\Des{Des}
\DeclareMathOperator\ldes{ldes}
\DeclareMathOperator\stat{stat}
\DeclareMathOperator\ides{ides}
\DeclareMathOperator\imaj{imaj}
\DeclareMathOperator\bl{bl}
\DeclareMathOperator\Ltr{ltrMax}
\newcommand{\BBB}{{\mathcal{B}}}
\newcommand{\ch}{\operatorname{ch}}
\newcommand{\Q}{{\mathcal Q}}
\newtheorem{theorem}{Theorem}[section]
\newtheorem{fact}[theorem]{Fact}
\newtheorem{proposition}[theorem]{Proposition}
\newtheorem{lemma}[theorem]{Lemma}
\newtheorem{corollary}[theorem]{Corollary}
\newtheorem{definition}[theorem]{Definition}
\newtheorem{obs}[theorem]{Observation}
\newtheorem{exa}[theorem]{Example}
\newtheorem{example}[theorem]{Example}
\newtheorem{defn}[theorem]{Definition}
\newtheorem{remark}[theorem]{Remark}
\newtheorem{problem}[theorem]{Problem}
\numberwithin{figure}{section}
\title[Block decomposition and Schur-positivity]{Block decomposition of permutations\\ and Schur-positivity}
\author{Ron M.\ Adin}
\address{Department of Mathematics, Bar-Ilan University, Ramat-Gan 52900, Israel}
\email{radin@math.biu.ac.il}
\author{Eli Bagno}
\address{Department of Applied Mathematics, Jerusalem College of Technology, 21 Havaad Haleumi St., Jerusalem, Israel}
\email{bagnoe@g.jct.ac.il}
\author{Yuval Roichman}
\address{Department of Mathematics, Bar-Ilan University, Ramat-Gan 52900, Israel}
\email{yuvalr@math.biu.ac.il}
\keywords{Schur positivity, block number, equidistribution}
\date{submitted: February 15, 2017; revised: August 28, 2017}
\begin{document}

\begin{abstract}


The {\em block number} of a permutation is the maximal number of
components in its expression as a direct sum. 
We show that, for $321$-avoiding permutations,
the set of left-to-right maxima has the same distribution 
when the block number is assumed to be $k$
as when the last descent of the inverse is assumed to be at position $n - k$.
This result is analogous to the Foata-Sch\"{u}tzenberger
equi-distribution theorem, and implies that the quasi-symmetric
generating function of descent set over $321$-avoiding
permutations with a prescribed number of blocks is
Schur-positive.

\end{abstract}

\maketitle

\tableofcontents

\section{Introduction}

Given any subset $A$ of the symmetric group $\S_n$, define the
quasi-symmetric function
\[
\Q(A) := \sum\limits_{\pi\in A} \F_{n,\Des(\pi)},
\]
where $\Des(\pi):=\{i:\ \pi(i)>\pi(i+1)\}$ is the {\em descent set} of $\pi$
and $\F_{n,D}$ (for $D \subseteq [n-1]$) are Gessel's {\em fundamental quasi-symmetric functions;} see Subsection~\ref{sec:prel_quasi} for more details. 
The following long-standing problem was first posed
in~\cite{Gessel-Reutenauer}.

\begin{problem}\label{prob:symmetric}
For which subsets $A\subseteq \S_n$ is $\Q(A)$ symmetric?
\end{problem}

A symmetric function is called {\em Schur-positive} if all the
coefficients in its expansion in the basis of Schur functions are nonnegative.
Determining whether a given symmetric function is
Schur-positive is a major problem in contemporary algebraic
combinatorics~\cite{Stanley_problems}.

Call a subset $A \subseteq \S_n$ {\em Schur-positive} if $\Q(A)$ is symmetric and Schur-positive. 
Classical examples of Schur-positive sets of permutations include 
inverse descent classes~\cite{Gessel}, Knuth classes~\cite{Gessel},
conjugacy classes~\cite[Theorem 5.5]{Gessel-Reutenauer}, and
permutations with a fixed inversion number~\cite[Prop. 9.5]{Adin-R}.

\medskip

New constructions of Schur-positive sets of permutations 
were described in~\cite{ER} and~\cite{Sagan_talk}. 
Inspired by these examples, Sagan and Woo posed the 
problem of finding Schur-positive pattern-avoiding sets~\cite{Sagan_talk}.



\medskip

The goal of this paper is to present a new example of a Schur-positive set of permutations which involves pattern-avoidance: the set of $321$-avoiding permutations having a prescribed number of blocks. We shall state that more explicitly.

\medskip

A permutation $\pi \in \S_n$ is {\em $321$-avoiding} if the sequence $(\pi(1), \ldots, \pi(n))$ contains no decreasing subsequence of length $3$. Denote by $\S_n(321)$ the set of $321$-avoiding
permutations in $\S_n$.

For a permutation $\pi\in \S_n$ let
\[
\bl(\pi)=|\{ i \,:\, (\forall j \le i)\ \ \pi(j) \le i \}|
\]
be the {\em block number} of $\pi$. It is equal to the maximal number of summands in an expression of $\pi$ as a direct sum of permutations; see Subsection~\ref{sec:block} below. 
The block number was studied in~\cite{St05} (and references therein)
as the cardinality of the {\em connectivity set} of $\pi$.

Denote
\[
Bl_{n,k} := \{\pi\in \S_n(321) \,:\, \bl(\pi) = k\}.
\]
Recall the {\em Frobenius characteristic map} $\ch$, 
from class functions on $\S_n$ to symmetric functions, 
defined by $\ch(\chi^{\lambda}) = s_{\lambda}$ and extended by linearity. Our
main result is:

\begin{theorem}\label{conj1}
For any $1\le k\le n$,
the set $Bl_{n,k}$
is Schur-positive.
In fact, for $1 \le k \le n-1$
\[
\Q(Bl_{n,k}) = \ch(\chi^{(n-1,n-k)}
\downarrow_{\S_n}^{\S_{2n-k-1}})
\]
where $\chi\downarrow^G_H$
stands for the restriction of a character $\chi$ from the group $G$ to its subgroup $H$; and, for $k = n$
\[
\Q(Bl_{n,n}) = \ch(\chi^{(n)}) = s_{(n)}.
\]
\end{theorem}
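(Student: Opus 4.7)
The plan is to reduce the claim to a more tractable set via the equidistribution announced in the abstract, and then match the resulting Schur expansion with the branching-rule expansion of the restricted character. The case $k=n$ is immediate: $Bl_{n,n}$ consists of the identity alone, whose descent set is empty, so $\Q(Bl_{n,n})=\F_{n,\emptyset}=s_{(n)}$. Fix $1\le k\le n-1$. Write
\[
L(\pi):=\{i:\pi(i)>\pi(j)\text{ for every }j<i\}
\]
for the set of positions of left-to-right maxima, and set $A_{n,k}:=\{\pi\in\S_n(321):\ldes(\pi^{-1})=n-k\}$.

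The first step is to establish the ``Foata--Sch\"utzenberger analog'' advertised in the abstract: the set-valued statistic $L$ is equidistributed on $Bl_{n,k}$ and on $A_{n,k}$. The natural route is an explicit $L$-preserving bijection $\phi:Bl_{n,k}\to A_{n,k}$, built from the direct-sum decomposition of a permutation in $Bl_{n,k}$ on one side and a structural description of $A_{n,k}$ on the other. To conclude the reduction one uses the elementary observation that in a $321$-avoider the values at non-left-to-right-maximum positions must themselves form an increasing subsequence, so that
\[
\Des(\pi)=\{i:i\in L(\pi)\text{ and }i+1\notin L(\pi)\}\qquad(\pi\in\S_n(321)).
\]
Thus $\Des$ is a function of $L$ on $\S_n(321)$, and the equidistribution of $L$ yields $\Q(Bl_{n,k})=\Q(A_{n,k})$.

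The second step is to evaluate $\Q(A_{n,k})$ via RSK. The correspondence restricts to a bijection between $\S_n(321)$ and pairs $(P,Q)$ of standard Young tableaux of a common shape $\lambda$ with $\ell(\lambda)\le 2$, and satisfies $\Des(\pi)=\Des(Q)$ and $\Des(\pi^{-1})=\Des(P)$. Summing $\sum_{Q\in\SYT(\lambda)}\F_{n,\Des(Q)}=s_\lambda$ over $Q$ first gives
\[
\Q(A_{n,k})=\sum_{\lambda\vdash n,\ \ell(\lambda)\le 2}N_\lambda\, s_\lambda,\qquad N_\lambda:=\#\{P\in\SYT(\lambda):\ldes(P)=n-k\}.
\]
It remains to show that $N_\lambda$ equals the multiplicity of $\chi^\lambda$ in $\chi^{(n-1,n-k)}\downarrow^{\S_{2n-k-1}}_{\S_n}$. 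By the branching rule,
\[
\chi^{(n-1,n-k)}\downarrow^{\S_{2n-k-1}}_{\S_n}=\sum_{\substack{\mu\vdash n-k-1\\ \mu\subset(n-1,n-k)}}f^\mu\,\chi^{(n-1,n-k)/\mu},
\]
so the target identity becomes $N_\lambda=\sum_\mu f^\mu\, c^{(n-1,n-k)}_{\mu,\lambda}$. Because both $(n-1,n-k)$ and $\lambda$ have at most two rows, the relevant Littlewood--Richardson coefficients lie in $\{0,1\}$ and are easy to list; the identity then reduces to a direct enumeration of two-row SYT of shape $\lambda$ with prescribed last descent.

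The main obstacle is Step~1. Converting the block-number condition $\bl(\pi)=k$ into the inverse-descent condition $\ldes(\pi^{-1})=n-k$ by an $L$-preserving bijection is where the genuine combinatorial content lies. The remaining steps are essentially standard bookkeeping: an RSK rewrite of the generating function followed by an explicit two-row Littlewood--Richardson calculation matching it with the branching expansion.
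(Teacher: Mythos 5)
Your outline follows the same architecture as the paper's proof: dispose of $k=n$ directly, reduce $\Q(Bl_{n,k})$ to $\Q(L_{n,n-k})$ via equidistribution of left-to-right maxima (using the fact that on $\S_n(321)$ the set $\Ltr(\pi)$ determines $\Des(\pi)$), expand via RSK into $\sum_\lambda N_\lambda s_\lambda$ with $N_\lambda$ counting two-row SYT with last descent $n-k$, and match against the branching-rule expansion of $\chi^{(n-1,n-k)}\downarrow_{\S_n}^{\S_{2n-k-1}}$. All of these reductions are correct. The problem is that the one step you correctly identify as ``where the genuine combinatorial content lies'' is not actually supplied. You assert that ``the natural route is an explicit $L$-preserving bijection $\phi:Bl_{n,k}\to A_{n,k}$, built from the direct-sum decomposition \dots and a structural description of $A_{n,k}$,'' but no such bijection is constructed, and it is not at all routine: the paper's bijection $f_n$ is defined by a delicate recursion on $n$ with three cases depending on the positions of $n-1$ and $n$ (delete the largest letter, recurse, reinsert, and multiply on the left by a cycle whose length depends on $\bl(\pi)$), and verifying that it preserves $\Ltr$, sends $\bl(\pi)=k$ to $\ldes(\,\cdot\,^{-1})=n-k$, preserves $321$-avoidance, and is injective occupies an entire section. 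In particular it is not built ``from the direct-sum decomposition'' in any direct way. As written, your proof of the main equidistribution is an announcement, not an argument, so the proposal has a genuine gap at its core.

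A secondary, smaller point: for the final identification $N_\lambda=\langle\chi^{(n-1,n-k)}\downarrow_{\S_n}^{\S_{2n-k-1}},\chi^\lambda\rangle$ you propose a direct Littlewood--Richardson computation for two-row shapes. That would work (the multiplicity is simply $|\SYT((n-1,n-k)/\lambda)|$ by iterating the branching rule, and two-row last-descent counts are ballot numbers), but you leave the enumeration unexecuted. The paper avoids it by rotating the skew shape $(n-1,n-k)/(n-m,m)$ by $180^\circ$ to $(n-m,m)/(k,1)$, identifying its SYT with straight-shape SYT of $(n-m,m)$ having \emph{first} descent $k$, and then invoking the reversal symmetry $\Des\leftrightarrow n-\Des$ valid on any Schur-positive set. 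If you complete the computation your route is a legitimate, slightly more pedestrian alternative for this step; but without the bijection in Step~1 the proof does not stand.
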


The proof involves a left-to-right-maxima-preserving bijection and a resulting equi-distribution result.
Specifically, let
\[
\Ltr (\pi):=\{i:\ \pi(i) = \max\{\pi(1), \ldots, \pi(i)\}\}
\]
be the set of {\em left-to-right maxima} in a permutation $\pi$,
and let
\[
\ldes(\pi):=\max\{i:\ i\in \Des(\pi)\}
\]
be the {\em last descent} of $\pi$,
with $\ldes(\pi) := 0$ if $\Des(\pi) = \varnothing$
(i.e., if $\pi$ is the identity permutation).

For every $I \subseteq [n],$ let ${\bf x}^I:=
\prod\limits_{i \in I} x_i$.
Our equi-distribution result is:

\begin{theorem}\label{main1_introduction}
For every positive integer $n$
\[
\sum_{\pi\in \S_n(321)}  {\bf x}^{\Ltr(\pi)} q^{\bl(\pi)} =
\sum_{\pi\in \S_n(321)}  {\bf x}^{\Ltr(\pi)}
q^{n-\ldes(\pi^{-1})}.
\]
\end{theorem}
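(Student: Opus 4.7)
The plan is to prove the identity by constructing a bijection $\varphi : \S_n(321) \to \S_n(321)$ which preserves the left-to-right-maxima set $\Ltr(\pi)$ and exchanges $\bl(\pi)$ with $n - \ldes(\pi^{-1})$.

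I would begin by exploiting the classical structure of $321$-avoiding permutations: both the LTR-maximum subsequence $\pi(a_1), \ldots, \pi(a_r)$ (where $\Ltr(\pi) = \{a_1 < \cdots < a_r\}$) and its complement are increasing. Hence $\pi$ is determined by the pair $(L, V)$, where $L = \Ltr(\pi)$ and $V = \{v_1 < \cdots < v_r\}$ is the value set of the LTR maxima. Setting $a_{r+1} := n+1$, the admissible pairs are precisely those with $v_r = n$ and the staircase inequalities $v_k \geq a_{k+1} - 1$ for all $k \in [r]$, as one checks by comparing the non-LTR-max values (which must also be increasing) against the prevailing LTR-max value.

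Next I would translate the two statistics into this $(L, V)$-language. A position $i$ is a cut-point of $\pi$ iff $i = v_k$ for some $k$ with $v_k < a_{k+1}$, which by the staircase lower bound forces $v_k = a_{k+1} - 1$; hence $\bl(\pi) = |\{k : v_k = a_{k+1} - 1\}|$, the number of saturated constraints. A short case analysis shows that the descents of $\pi^{-1}$ are precisely the values $u \notin V$ with $u + 1 \in V$, so $n - \ldes(\pi^{-1}) = \max\{k : \{n-k+1, \ldots, n\} \subseteq V\}$, the length of the top streak of $V$. These formulas reduce the theorem, for each fixed $L$, to an equidistribution of the \emph{saturation count} and the \emph{top streak length} on the admissible $V$'s.

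The final step is the heart of the proof: for each $L$, construct an involution $\varphi_L$ on the admissible $V$'s which swaps these two statistics. A convenient encoding is via the gap composition $\mu_k := v_k - v_{k-1} - 1$ (with $v_0 := 0$); then admissible $V$'s correspond to compositions of $n - r$ whose partial sums dominate the fixed sequence $(a_{k+1} - k - 1)_k$, saturations correspond to tight partial sums, and the top streak length equals (up to an edge case at the identity) one plus the number of trailing zeros of $\mu$. The main obstacle is that the lower-bound sequence enters the saturation count but not the trailing-zero count, so a naive reversal of $\mu$ fails to preserve admissibility. I expect the correct construction to arise either by induction on $r$ (peeling off the first block of $\pi$) or by a lattice-path reformulation in which both statistics become instances of a common \emph{return} quantity, amenable to a classical path transformation. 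Assembling the $\varphi_L$'s over all $L$ then yields the required bijection $\varphi$.
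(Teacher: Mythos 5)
Your reduction to the $(L,V)$ encoding is correct and well executed: a $321$-avoiding permutation is indeed determined by the positions and values of its left-to-right maxima, your admissibility condition $v_k \ge a_{k+1}-1$ is right, and both translations --- $\bl(\pi)$ as the number of saturated constraints $v_k = a_{k+1}-1$, and $n-\ldes(\pi^{-1})$ as the length of the top streak of $V$ (via $\Des(\pi^{-1}) = \{u : u\notin V,\ u+1\in V\}$) --- check out. This is a genuinely different framing from the paper, which works directly with permutations and never passes to this encoding.

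However, the proof has a genuine gap exactly where you flag it: the involution $\varphi_L$ swapping the saturation count with the top-streak length is never constructed. You correctly observe that naive reversal of the gap composition $\mu$ fails because the lower-bound sequence $(a_{k+1}-k-1)_k$ enters one statistic but not the other, and then you write that you ``expect the correct construction to arise'' from induction on $r$ or from ``a classical path transformation.'' Neither is specified, and neither is routine. This missing step is the entire content of the theorem: the statement is precisely the assertion that such a correspondence exists for each fixed $L$. For comparison, the paper's proof spends all of its effort here, building a recursive bijection $f_n$ (a $321$-avoiding analogue of Foata's bijection) that peels off the largest letter and reinserts it with a corrective left-multiplication by a cycle; the delicate part is a three-way case analysis showing that $\Ltr$ is preserved while $\bl$ is traded for $n-\ldes(\pi^{-1})$. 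The lattice-path route you gesture at also exists (Rubey's follow-up, via Krattenthaler's bijection to Dyck paths), but there the required transformation is a nontrivial involution interchanging the number of returns with the position of the first double fall while preserving the rise composition --- not an off-the-shelf classical map. To complete your argument you would need to actually exhibit $\varphi_L$ and verify it preserves admissibility, which is where all the difficulty lives.
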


\begin{remark}
An equivalent formulation, replacing $\pi$ by $\pi^{-1}$ and using $\bl(\pi^{-1}) = \bl(\pi)$, is:
\[
\sum_{\pi\in \S_n(321)}  {\bf x}^{\Ltr(\pi^{-1})} q^{\bl(\pi)} =
\sum_{\pi\in \S_n(321)}  {\bf x}^{\Ltr(\pi^{-1})}
q^{n-\ldes(\pi)}.
\]
This is reminiscent of the classical Foata-Sch\"{u}tzenberger Theorem~\cite[Theorem 1]{FS}
\[
\sum_{\pi\in \S_n}  {\bf x}^{\Des(\pi^{-1})} q^{\inv(\pi)} =
\sum_{\pi\in \S_n}  {\bf x}^{\Des(\pi^{-1})}
q^{\maj(\pi)};
\]
see Observation~\ref{obs2} below.
\end{remark}

The proof of Theorem~\ref{main1_introduction}  applies a recursive bijection, that may be considered as a $321$-avoiding analogue of Foata's fundamental bijection~\cite{Foata}. An alternative bijective proof of Theorem~\ref{main1_introduction}, applying Krattenthaler's bijection between $321$-avoiding permutations and Dyck paths, 
is presented in a recent follow-up paper by Martin Rubey~\cite{Rubey}. 

\bigskip

The rest of the paper is organized as follows.
Necessary preliminaries and background are given in Section~\ref{sec:prelim}.
The block decomposition of a permutation is described in Section~\ref{sec:block},
and the enumeration of $321$-avoiding permutations with prescribed block number is discussed in Section~\ref{sec:count}.
The equi-distribution phenomenon (Theorem~\ref{main1_introduction}) is proved in Section~\ref{sec:equidistribution}.
In Section~\ref{sec:Schur} this phenomenon is applied to prove Schur-positivity.
We close the paper with final remarks and open problems.

\section{Preliminaries}\label{sec:prelim}

\subsection{Statistics on permutations and on SYT}\label{subsec:stat}

For a positive integer $n$ let $[n]:=\{1,2,\dots,n\}$, and let $\S_n$ denote the $n$-th symmetric group, the group of all permutations of $[n]$.
The {\em descent set} of a permutation $\pi\in \S_n$ is defined by
\[
\Des(\pi):=\{i \,:\, \pi(i)>\pi(i+1)\}.
\]
For a subset $J\subseteq [n-1]$, define the {\em descent class}
\[
D_{n,J}:=\{\pi\in \S_n \,:\, \Des(\pi)=J\}
\]
and the corresponding {\em inverse descent class} 
\[
D_{n,J}^{-1} := \{\pi^{-1} \,:\, \pi\in D_{n,J}\}.
\]
For a permutation $\pi \in \S_n$ let
\[
\Ltr(\pi):=\{i \,:\, \pi(i) = \max\{\pi(1), \ldots, \pi(i)\}\},
\]
the set of {\em left-to-right maxima} in $\pi$.

\begin{obs}
If $\pi \in \S_n$ then the restriction of $\pi$ to $\Ltr(\pi)$ (as a subsequence of $[n]$) is monotone increasing.
If, moreover, $\pi \in \S_n(321)$ then the restriction of $\pi$ to the complementary subsequence $[n] \setminus \Ltr(\pi)$ is also monotone increasing.
\end{obs}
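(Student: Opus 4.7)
The plan is to handle the two assertions separately; both are immediate from the definitions, with the second using only the forbidden-pattern hypothesis in a one-line way.

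For the first part, I would simply unwind the definition of $\Ltr(\pi)$. If $i < j$ both lie in $\Ltr(\pi)$, then by definition $\pi(j) = \max\{\pi(1),\dots,\pi(j)\} \ge \pi(i)$, and since $\pi$ is a permutation the inequality is strict. Hence the values of $\pi$ read along the indices in $\Ltr(\pi)$ form a strictly increasing sequence, which is the claim.

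For the second part, I would argue by contradiction. Suppose there are indices $i < j$, both in $[n] \setminus \Ltr(\pi)$, with $\pi(i) > \pi(j)$. The fact that $i \notin \Ltr(\pi)$ means, by definition, that $\pi(i)$ is not a new maximum, so there exists some $k < i$ with $\pi(k) > \pi(i)$. Combining these, $k < i < j$ and $\pi(k) > \pi(i) > \pi(j)$, exhibiting a $321$-pattern in $\pi$ and contradicting $\pi \in \S_n(321)$. Therefore no such $i,j$ exist, and the restriction of $\pi$ to $[n]\setminus \Ltr(\pi)$ is (strictly) monotone increasing.

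There is essentially no obstacle here: both parts are essentially definitional once one picks the right witness (the index $k$ producing the $321$-pattern in the second part). The observation is really a convenient reformulation: a $321$-avoiding permutation splits canonically into two interleaved increasing subsequences indexed by $\Ltr(\pi)$ and its complement, a fact that will be used repeatedly in the sequel.
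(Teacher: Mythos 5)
Your proof is correct; the paper states this as an Observation without proof, and your argument (strictness from the maximum definition for the first part, and the witness $k<i$ with $\pi(k)>\pi(i)$ producing a $321$-pattern for the second) is exactly the standard justification the authors leave implicit.
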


\begin{obs}\label{obs2}
If $\pi \in \S_n(321)$ then the set $\Ltr(\pi)$ uniquely determines the set $\Des(\pi)$.
Explicitly, for any $1 \le i \le n-1$,
\[
i \in \Des(\pi) \iff i \in \Ltr(\pi) \text{ and } i+1 \not\in \Ltr(\pi).
\]
\end{obs}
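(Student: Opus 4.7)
The plan is a direct case analysis based on membership of $i$ and $i+1$ in $\Ltr(\pi)$. For the $(\Leftarrow)$ direction, suppose $i \in \Ltr(\pi)$ and $i+1 \notin \Ltr(\pi)$. Then by definition $\pi(i) = \max\{\pi(1), \ldots, \pi(i)\}$, while $\pi(i+1) < \max\{\pi(1), \ldots, \pi(i+1)\}$. The latter maximum is either $\pi(i+1)$ itself or an element of $\{\pi(1), \ldots, \pi(i)\}$, and since it cannot equal $\pi(i+1)$, it must equal $\pi(i)$. Hence $\pi(i+1) < \pi(i)$, so $i \in \Des(\pi)$. Note that this direction does not use $321$-avoidance.

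For the $(\Rightarrow)$ direction, suppose $i \in \Des(\pi)$, i.e., $\pi(i) > \pi(i+1)$. The chain $\pi(i+1) < \pi(i) \le \max\{\pi(1),\ldots,\pi(i)\}$ immediately implies $i+1 \notin \Ltr(\pi)$. The assertion that $i \in \Ltr(\pi)$ is where $321$-avoidance is required: if $i$ were not a left-to-right maximum, there would exist $j < i$ with $\pi(j) > \pi(i)$; combined with $\pi(i) > \pi(i+1)$, the triple $(\pi(j),\pi(i),\pi(i+1))$ would realize a $321$ pattern, contradicting $\pi \in \S_n(321)$.

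There is no serious obstacle here; the proof is essentially a two-line application of the pattern-avoidance hypothesis, and it could equivalently be deduced from the preceding observation that both $\pi|_{\Ltr(\pi)}$ and $\pi|_{[n]\setminus \Ltr(\pi)}$ are monotone increasing (ruling out descents within either block). It is worth flagging in the write-up that $321$-avoidance is genuinely necessary: in general one may have a descent $\pi(i) > \pi(i+1)$ with $i \notin \Ltr(\pi)$, as in $\pi = 2431$, where $\pi(3) > \pi(4)$ but neither position belongs to $\Ltr(\pi)$.
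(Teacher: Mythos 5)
Your proof is correct, and the paper offers no proof of its own for this observation, so there is nothing to diverge from: your direct argument (with $321$-avoidance entering only in showing $i\in\Des(\pi)$ forces $i\in\Ltr(\pi)$) is precisely the intended justification, and it matches the route suggested by the paper's preceding observation about the two increasing subsequences. The counterexample $\pi=2431$ correctly illustrates that the hypothesis cannot be dropped.
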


\begin{example}
Let $\pi=251348697\in \S_9(321)$, written in one-line notation, i.e., $\pi(1)=2$, $\pi(2)=5$, etc.
The restriction of $\pi$ to the set $\Ltr(\pi)=\{1,2,6,8\}$, viewed as the sequence $1268$, is $2589$, a monotone increasing sequence. 
Similarly, the restriction of $\pi$ to the complementary subsequence $[9] \setminus \Ltr(\pi) = 34579$ is $13467$, which is also monotone increasing. 
The descent set $\Des(\pi)=\{2,6,8\}=\{i\in \Ltr(\pi):\ i+1\not\in \Ltr(\pi)\}$.
\end{example}

For a permutation $\pi\in S_n$ let
\[
\ldes(\pi):=\max\{i:\ i\in \Des(\pi)\},
\]
be its {\em last descent}, with $\ldes(\pi) := 0$ if $\Des(\pi) = \varnothing$
(i.e., if $\pi$ is the identity permutation).


For a skew shape $\lambda/\mu$, let $\SYT(\lambda/\mu)$ be the set
of standard Young tableaux of shape $\lambda/\mu$. We use the
English convention, according to which row indices increase from top to bottom
(see, e.g., \cite[Ch.\ 2.5]{Sagan}). The {\em height} of a standard Young tableau $T$ is the number of rows in $T$.
%
The {\em descent set} of $T$ 
is
\[
\Des(T):=\{i \,:\, i+1 \text{ appears in a lower row of } T \text{ than } i\}.
\]
Its {\em last descent} is
\[
\ldes(T):=\max\{i \,:\, i \in \Des(T)\},
\]
with $\ldes(T) := 0$ if $\Des(T) = \varnothing$.

We shall make use of the Robinson-Schensted-Knuth (RSK) correspondence which maps each permutation $\pi \in \S_n$ to a pair $(P_{\pi}, Q_{\pi})$ of standard Young tableaux of the same shape $\lambda$. A detailed description can be found, for example, in  \cite[Ch.\ 3.1]{Sagan} or in \cite[Ch.\ 7.11]{St2}. A fundamental property of the RSK correspondence is:

\begin{fact}\label{fact:RSK_DES}
For each $\pi\in \S_n$,
$\Des(P_{\pi}) = \Des(\pi^{-1})$
and
$\Des(Q_{\pi}) = \Des(\pi)$.
\end{fact}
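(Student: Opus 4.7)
The plan is to reduce both statements to the single assertion $\Des(Q_\pi) = \Des(\pi)$, and to derive that from a careful analysis of the row-bumping algorithm. The symmetric assertion $\Des(P_\pi) = \Des(\pi^{-1})$ will then fall out of the fundamental symmetry theorem of RSK, namely $\RSK(\pi^{-1}) = (Q_\pi, P_\pi)$, which I will assume as a standard (nontrivial) fact that is established independently, say via Fomin's growth rules or via the symmetry of the Viennot geometric construction.

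The heart of the argument is the following \emph{two-step insertion lemma}: if $P$ is a tableau and $a,b$ are two distinct values not appearing in $P$, and we insert $a$ into $P$ to get a new box $\alpha$, then insert $b$ into the result to get a new box $\beta$, then $\beta$ lies in a strictly lower row than $\alpha$ (in English convention) if and only if $a > b$. I would prove this by tracing the two bumping paths $B_a$ and $B_b$ simultaneously, row by row. The invariant to maintain is: if $a < b$, then in every row touched by both paths the element of $B_b$ sits strictly to the right of the element of $B_a$, and the path $B_b$ terminates weakly before $B_a$ reaches its final row; if $a > b$, then in every row touched by both paths the element of $B_b$ sits weakly to the left of the element of $B_a$, so $B_b$ must extend to a row strictly below $\alpha$. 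The induction step in each case is a short case-analysis of what happens to a single bumped entry depending on whether the element from $B_a$ already sitting in row $r$ is less than or greater than the element from $B_b$ that is about to be inserted into row $r$.

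Given the two-step insertion lemma, the equality $\Des(Q_\pi) = \Des(\pi)$ is immediate: applying the lemma with $a = \pi(i)$ and $b = \pi(i+1)$ and the running tableau produced by inserting $\pi(1), \ldots, \pi(i-1)$, the box labelled $i+1$ in $Q_\pi$ lies strictly below the box labelled $i$ in $Q_\pi$ precisely when $\pi(i) > \pi(i+1)$, which is exactly the condition $i \in \Des(\pi)$. By the definition of $\Des(Q_\pi)$, this gives the two inclusions at once.

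Finally, to obtain $\Des(P_\pi) = \Des(\pi^{-1})$, I apply what we just proved to the permutation $\pi^{-1}$: $\Des(Q_{\pi^{-1}}) = \Des(\pi^{-1})$. The RSK symmetry theorem identifies $Q_{\pi^{-1}}$ with $P_\pi$, and so $\Des(P_\pi) = \Des(\pi^{-1})$. The main obstacle is clearly the two-step insertion lemma; while the statement is clean, making the induction on rows go through requires tracking the relative horizontal positions of the two bumping paths, not only their rows, since it is the horizontal comparison that forces the correct direction of the next bump.
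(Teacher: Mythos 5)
Your argument is correct, but note that the paper does not prove this statement at all: it is labelled a \emph{Fact} and implicitly cited from the literature (it is Lemma~7.23.1 in Stanley's \emph{Enumerative Combinatorics}, Vol.~2, referenced as \cite{St2} in the paper), so there is no internal proof to compare against. What you have written is essentially the standard textbook proof of that cited lemma. Your ``two-step insertion lemma'' is the classical row bumping lemma (inserting $a$ then $b$ produces a new box for $b$ that is strictly right of and weakly above the box for $a$ when $a<b$, and weakly left of and strictly below it when $a>b$), and your row-by-row invariant is the right one; the point that makes the induction close is the one you identify, namely that the entries passed down to the next row inherit the same order relation as $a$ and $b$ (if $a<b$ the two bumped entries satisfy $a'<b'$, and if $a>b$ they satisfy $a'>b'$), together with the observation that in the case $a>b$ the second path can never terminate in a row where the first path still has an entry, so it is forced strictly deeper. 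The deduction $\Des(Q_\pi)=\Des(\pi)$ from the lemma is immediate as you say, and reducing $\Des(P_\pi)=\Des(\pi^{-1})$ to the $Q$-statement via the symmetry theorem $\RSK(\pi^{-1})=(Q_\pi,P_\pi)$ is the standard route; you are explicit that you import that theorem as an external black box, which is reasonable since it is independent of the descent analysis. In short: no gap, just a self-contained proof of something the paper outsources.
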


\subsection{The $k$-fold Catalan number}\label{sec:prel_Catalan}

Recall the $n$-th Catalan number, defined by
\[
C_n := \frac{1}{n+1} \binom{2n}{n} 
= \binom{2n}{n} - \binom{2n}{n+1} \qquad (n \ge 0),
\]
with generating function
\[
c(x) := \sum_{n=0}^{\infty} C_n x^n = \frac{1-\sqrt{1-4x}}{2x}.
\] 


For each $0 \le k \le n$, the {\em $n$-th $k$-fold Catalan number} $C_{n,k}$ is 
the coefficient of $x^n$ in $(xc(x))^k$.
These numbers are also called {\em ballot numbers}, and form the {\em Catalan triangle}~\cite[A009766]{OEIS}.
As proved by Catalan
himself~\cite{C}, they are given explicitly by
\[
C_{n,k} = \frac{k}{2n-k} \binom{2n-k}{n} 
= \binom{2n-k-1}{n-1} - \binom{2n-k-1}{n} \qquad (1 \le k \le n) 
\]
and $C_{n,0} = \delta_{n,0}$ $(n \ge 0)$;
in particular, $C_{n,1} = C_{n-1}$ for $n \ge 1$.

Among the many interpretations of $C_{n,k}$ one can
mention the number of lattice paths from $(k,1)$ to $(n,n)$, consisting
of steps $(1,0)$ and $(0,1)$, which never go strictly above the line $y=x$; see, e.g., \cite[Cor.\ 16]{T} which uses a slightly different indexing.

The following proposition, reformulating results presented in \cite{CGG, T}, 
relates the $k$-fold Catalan numbers to $321$-avoiding permutations and to standard Young tableaux.

\begin{proposition}\label{prop1} {\rm (\cite{CGG, T})}
For positive integers $1 \le k \le n$,
\[
|\{\pi \in \S_n(321) \,:\, \ldes(\pi^{-1})=n-k\}| = C_{n,k}
\]
and
\[
|SYT(n-1,n-k)| = C_{n,k}.
\]
\end{proposition}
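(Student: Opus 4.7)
The plan is to handle the two equalities separately.

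For the SYT count, I would apply the hook length formula to the two-row shape $(n-1,n-k)$, which has $2n-k-1$ cells. A direct enumeration of hooks gives: in row~$1$, columns $1,\ldots,n-k$ (those with a cell beneath) have hook lengths $n,n-1,\ldots,k+1$, while columns $n-k+1,\ldots,n-1$ have hooks $k-1,k-2,\ldots,1$; in row~$2$, columns $1,\ldots,n-k$ have hooks $n-k,n-k-1,\ldots,1$. Multiplying, the product of hooks equals $n!(n-k)!/k$, so
\[
f^{(n-1,n-k)} = \frac{(2n-k-1)!\,k}{n!(n-k)!} = \frac{k}{2n-k}\binom{2n-k}{n} = C_{n,k},
\]
matching the formula recalled in Subsection~\ref{sec:prel_Catalan}. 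The same conclusion can be reached through the determinantal formula $f^{(a,b)} = \binom{a+b}{b} - \binom{a+b}{b-1}$ for two-row shapes, which follows from the reflection principle.

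For the permutation count, I would invoke the RSK correspondence. By Fact~\ref{fact:RSK_DES}, $\ldes(\pi^{-1}) = \ldes(P_\pi)$, and by Schensted's theorem, $\pi \in \S_n(321)$ iff the common shape of $(P_\pi,Q_\pi)$ has at most two rows. Hence the set in question is in bijection with pairs $(P,Q)$ of same-shape at-most-two-row SYTs of size $n$ satisfying $\ldes(P) = n-k$. Encoding such a $P$ of shape $(a,b)$ as a length-$n$ ballot word in $\{U,D\}$ ($U$ for row~$1$, $D$ for row~$2$), the descent condition $\ldes(P) = n-k$ forces step $n-k$ to be $U$, step $n-k+1$ to be $D$, and the suffix from position $n-k+1$ to have the normal form $D^j U^{k-j}$ for some $1 \le j \le k$. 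Summing $f^{(a,b)}$ (to count $Q$) times the number of admissible $P$'s over all shapes $(a,b)$ should collapse, via a standard ballot-number identity, to the closed form $C_{n,k}$.

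The main obstacle is the bookkeeping in the permutation count: aggregating over shapes $(a,b)$, suffix splits, and ballot sequences is not automatic. A cleaner alternative, which would simultaneously yield both equalities, is to construct a direct bijection between $\{\pi \in \S_n(321) : \ldes(\pi^{-1}) = n-k\}$ and $SYT(n-1,n-k)$, for instance by adapting Krattenthaler's bijection between $\S_n(321)$ and Dyck paths and tracking how $\ldes(\pi^{-1})$ transforms under it. Since the proposition is explicitly stated as a reformulation of results in \cite{CGG, T}, the most concise written proof is to translate those references directly into the present notation.
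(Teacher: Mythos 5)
Your treatment of the second equality is complete and correct, and it takes a genuinely different route from the paper: you compute $f^{(n-1,n-k)}$ via the hook length formula (the hook multiset and the product $n!\,(n-k)!/k$ check out, giving $\frac{k}{2n-k}\binom{2n-k}{n}=C_{n,k}$), whereas the paper identifies $\SYT(n-1,n-k)$ with sub-diagonal lattice paths from $(0,0)$ to $(n-1,n-k)$ and cites Tedford's path count. Your version is more self-contained; the paper's is shorter because it leans on the reference.

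The first equality, however, is not actually proved. The RSK reduction is fine as far as it goes ($\ldes(\pi^{-1})=\ldes(P_\pi)$ by Fact~\ref{fact:RSK_DES}, and $321$-avoidance is equivalent to at most two rows), and your reading of the condition $\ldes(P)=n-k$ on the ballot word is correct. But the final step --- summing $f^{(a,b)}$ against the number of admissible $P$'s over all two-row shapes and claiming this ``should collapse, via a standard ballot-number identity'' to $C_{n,k}$ --- is precisely the nontrivial content, and it is not a routine identity: in the paper this collapse is only achieved in Section~\ref{sec:Schur}, where the count $|\{P\in\SYT(n-m,m):\ldes(P)=n-k\}|$ is identified with $|\SYT((n-1,n-k)/(n-m,m))|$ via a $180^\circ$ rotation together with the first-descent/last-descent equidistribution of Lemma~\ref{prop:first-last} (which itself rests on Schur-positivity). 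Your fallback of ``translating the references'' is legitimate and is what the paper in fact does, but you have not supplied the translation; the missing steps are (i) reversal of the one-line notation turns a $123$-avoiding permutation with first ascent at position $k$ into a $321$-avoiding permutation with $\ldes=n-k$, so \cite{CGG} gives $|\{\pi\in\S_n(321):\ldes(\pi)=n-k\}|=C_{n,k}$, and (ii) since $\S_n(321)$ is closed under $\pi\mapsto\pi^{-1}$, the statistics $\ldes(\pi)$ and $\ldes(\pi^{-1})$ are equidistributed over it, which converts the statement about $\ldes(\pi)$ into the stated one about $\ldes(\pi^{-1})$. Without (i) and (ii), or without carrying out the aggregation you sketched, the first equality remains unestablished.
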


\begin{proof}
Connolly et al.~\cite{CGG} showed that the $n$-th $k$-fold Catalan number $C_{n,k}$ is equal to the number of $123$-avoiding permutations in $\S_n$ with first ascent at position $k$, which equals in turn, 
via reversing the permutations in $\S_n$, 
to the number of $321$-avoiding permutations $\pi \in \S_n$ with $\ldes(\pi) = n-k$. Since $\S_n(321)$ is closed under inversion $(\pi \mapsto \pi^{-1})$, the statistics $\ldes(\pi)$ and $\ldes(\pi^{-1})$ have the same distribution over $\S_n(321)$, proving the first assertion.

As for the second assertion, 
$C_{n,k}$ is the number of lattice paths from $(k,1)$ to $(n,n)$, or equivalently from $(0,0)$ to $(n-1,n-k)$, which never go above the line $y=x$~\cite[Cor.\ 16]{T}. 
The latter, in turn, are in bijection with the standard Young tableaux of shape $(n-1,n-k)$, with horizontal steps in the path corresponding to the entries in the first row of the tableau and vertical steps corresponding to the entries in the second row. 

\end{proof}






\subsection{Symmetric and quasi-symmetric functions}\label{sec:prel_quasi}

\ytableausetup{centertableaux}

A {\em partition} of a positive integer $n$ is a weakly decreasing sequence $\lambda=(\lambda_1,\ldots,\lambda_t)$ of positive integers whose sum is $n$. We denote $\lambda \vdash n$.


Let ${\bf x} := (x_1,x_2,\ldots)$ be an infinite sequence of commuting indeterminates. 
Symmetric and quasi-symmetric functions in ${\bf x}$ can be defined over various (commutative) rings of coefficients, including the ring of integers; for simplicity we define it over the field $\bbq$ of rational numbers. 

\begin{defn}                                 
A {\em symmetric function} in the variables $x_1,x_2,\ldots$ is a formal power series $f({\bf x})\in \bbq[[{\bf x}]]$, of bounded degree, 
such that for any three sequences (of the same length $k$) of positive integers, $(a_1,\ldots,a_k)$, $(i_1,\ldots,i_k)$ and $(j_1,\ldots, j_k)$, such that $i_1,\ldots,i_k$ are distinct and $j_1,\ldots, j_k$ are distinct, the coefficients of $x_{i_1}^{a_1}\cdots x_{i_k}^{a_k}$ and of $x_{j_1}^{a_1}\cdots x_{j_k}^{a_k}$ in $f$ are equal:
\[
[x_{i_1}^{a_1}\cdots x_{i_k}^{a_k}]f = 
[x_{j_1}^{a_1}\cdots x_{j_k}^{a_k}]f.
\]
\end{defn}

Schur functions, indexed by partitions of $n$, form a distinguished basis for $\Lambda^n$, the vector space of symmetric functions which are homogeneous of degree $n$;
see, e.g., \cite[Corollary 7.10.6]{St2}.
A symmetric function in $\Lambda^n$ is {\em Schur-positive} if all the coefficients in its expansion in the basis $\{s_{\lambda} \,:\, \lambda \vdash n\}$ of Schur functions are nonnegative.

The following definition of a quasi-symmetric function can be found in~\cite[7.19]{St2}.
                                                                                                        \begin{defn}                                 
A {\em quasi-symmetric function} in the variables $x_1,x_2,\ldots$  is a formal power series $f({\bf x})\in \bbq[[{\bf x}]]$, of bounded degree, 
such that for any three sequences (of the same length $k$) of positive integers, $(a_1,\ldots,a_k)$, $(i_1,\ldots,i_k)$ and $(j_1,\ldots, j_k)$, where the last two are increasing, the coefficients of $x_{i_1}^{a_1}\cdots x_{i_k}^{a_k}$ and of $x_{j_1}^{a_1}\cdots x_{j_k}^{a_k}$ in $f$ are equal:
\[
[x_{i_1}^{a_1}\cdots x_{i_k}^{a_k}]f = 
[x_{j_1}^{a_1}\cdots x_{j_k}^{a_k}]f
\]
whenever $i_1 < \ldots < i_k$ and $j_1 < \ldots < j_k$. 
\end{defn}
                                                                                                                     Clearly, every symmetric function is quasi-symmetric, but not conversely: $\sum_{i<j}{x_i^2 x_j}$, for example, is quasi-symmetric but not symmetric.

For each subset $D \subseteq [n-1]$ define the {\em
fundamental quasi-symmetric function}
\[
\F_{n,D}(\x) := \sum_{i_1\le i_2 \le \ldots \le i_n \atop {i_j <
i_{j+1} \text{ if } j \in D}} x_{i_1} x_{i_2} \cdots x_{i_n}.
\]

Let $\BBB$ be a (multi)set of combinatorial objects, equipped with a {\em descent map} $\Des: \BBB \to 2^{[n-1]}$ which associates to each
element $b\in \BBB$ a subset $\Des(b) \subseteq [n-1]$. Define the
quasi-symmetric function
\[
\Q(\BBB) := \sum\limits_{b\in \BBB} m(b,\BBB) \F_{n,\Des(b)},
\]
where $m(b,\BBB)$ is the multiplicity of the element $b$ in
$\BBB$. With some abuse of terminology, we say that $\BBB$ is Schur-positive when $\Q(\BBB)$ is.

\medskip

The following key theorem is due to Gessel.

\begin{theorem}{\rm \cite[Theorem 7.19.7]{St2}}\label{G1} 
For every shape $\lambda \vdash n$,
\[
\Q({\SYT(\lambda)})=s_{\lambda}.
\]
\end{theorem}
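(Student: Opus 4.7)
The plan is to use the \emph{standardization} correspondence, which relates the monomial expansion of $s_\lambda$ over semistandard Young tableaux to the expansion indexed by $\SYT(\lambda)$. I would begin from the combinatorial definition
\[
s_\lambda = \sum_{T} \x^T,
\]
where the sum ranges over semistandard Young tableaux (SSYT) $T$ of shape $\lambda$ with positive integer entries and $\x^T = \prod_i x_i^{c_i(T)}$ records the content of $T$ (see \cite[Def.~7.10.1]{St2}). For each SSYT $T$ of size $n$ one defines $\st(T) \in \SYT(\lambda)$ by relabeling the entries $1,2,\ldots,n$ in increasing order of value, breaking ties among equal entries of $T$ by scanning cells in order of decreasing row index (and left-to-right within a row). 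This yields a well-defined surjection $\st\colon \{\text{SSYT of shape }\lambda\} \to \SYT(\lambda)$.

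The heart of the argument is the following fiber description: for each fixed $Q \in \SYT(\lambda)$, the assignment placing $a_i$ in the cell of $Q$ that contains $i$ is a bijection between
\[
\bigl\{(a_1,\ldots,a_n) \in \ZZ_{>0}^n \,:\, a_1 \le \cdots \le a_n,\ \text{and } a_j < a_{j+1}\ \text{whenever } j \in \Des(Q)\bigr\}
\]
and $\st^{-1}(Q)$. Row-weak monotonicity of the constructed filling is immediate from the row-monotonicity of $Q$ and the weak increase of the $a_i$. Column-strictness is the delicate point: for cells $c'$ and $c$ lying in the same column of $Q$ with $c$ strictly below $c'$, one needs $a_{Q(c')} < a_{Q(c)}$, which follows once one shows that the interval $[Q(c'), Q(c)-1]$ must contain at least one descent of $Q$ (producing the required strict jump in the $a_i$'s). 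Indeed, if there were no descent in this interval, each of $Q(c')+1, Q(c')+2, \ldots, Q(c)$ would lie in a row weakly above its predecessor in $Q$, contradicting the fact that $c$ lies strictly below $c'$. The tie-breaking rule in the definition of $\st$ ensures that the constructed filling really does standardize back to $Q$.

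Granting the fiber description, the theorem follows by a regrouping of the combinatorial expansion of $s_\lambda$:
\[
s_\lambda = \sum_{Q \in \SYT(\lambda)} \sum_{T \in \st^{-1}(Q)} \x^T = \sum_{Q \in \SYT(\lambda)} \sum_{\substack{a_1 \le \cdots \le a_n \\ a_j < a_{j+1} \text{ if } j \in \Des(Q)}} x_{a_1} \cdots x_{a_n} = \sum_{Q \in \SYT(\lambda)} \F_{n,\Des(Q)} = \Q(\SYT(\lambda)),
\]
where the next-to-last equality is the definition of $\F_{n,\Des(Q)}$ and the last one is the definition of $\Q$ applied to the (multiplicity-free) set $\SYT(\lambda)$. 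The main obstacle in this plan is the column-strictness verification in the fiber description; although the ``downward chasing'' argument above is short, it is where the genuine combinatorial content of Gessel's theorem resides.
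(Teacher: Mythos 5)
Your argument is correct. Note that the paper does not prove this statement at all --- it is quoted verbatim from Stanley \cite[Theorem 7.19.7]{St2} --- so there is no internal proof to compare against; what you have written is essentially the standard proof, namely the specialization to the Young-diagram poset of Stanley's $(P,\omega)$-partition machinery, made self-contained via standardization. The two substantive points are exactly the ones you isolate: column-strictness of the filling attached to a sequence $(a_1,\dots,a_n)$ with strict rises on $\Des(Q)$ (your ``no descent in $[Q(c'),Q(c)-1]$ forces $c$ weakly above $c'$'' chase is the right argument), and the converse fact that every $T\in\st^{-1}(Q)$ reads off a sequence that is strict precisely where forced --- the latter is where the tie-breaking convention (equal entries scanned bottom row first, equivalently left to right along the horizontal strip) is actually used, and it would be worth one more sentence to verify that direction explicitly, since the bijectivity of the fiber description needs both containments. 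With that, the regrouping
\[
s_\lambda=\sum_{Q\in\SYT(\lambda)}\ \sum_{T\in\st^{-1}(Q)}\x^T=\sum_{Q\in\SYT(\lambda)}\F_{n,\Des(Q)}=\Q(\SYT(\lambda))
\]
is immediate from the definitions as used in the paper.
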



There is a dictionary relating symmetric functions to characters of the symmetric group $\S_n$. The irreducible characters of $\S_n$ are indexed by partitions $\lambda \vdash n$ and denoted $\chi^\lambda$. 
The {\em Frobenius characteristic map} $\ch$ from class functions on $\S_n$ to symmetric functions is defined by $\ch(\chi^{\lambda}) = s_{\lambda}$, and extended by linearity.
Theorem~\ref{G1} may then be restated as follows:
\[
\ch(\chi^\lambda) = \sum_{T \in SYT(\lambda)} \F_{n,\Des(T)}.
\]

A combinatorial rule for the restriction of irreducible $\S_n$-characters was given by Young~\cite[Theorem 9.2]{James}:

\begin{theorem}\label{BranchingRule} (The Branching Rule)
For $\lambda\vdash n$
\[
\chi^\lambda \downarrow_{\S_{n-1}}^{\S_n} = \sum_{\mu \vdash n-1 \atop |\lambda/\mu|=1}\chi^\mu.
\]
\end{theorem}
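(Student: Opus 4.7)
The plan is to derive the Branching Rule by pushing the question through the Frobenius characteristic map $\ch$ into the ring of symmetric functions, where it becomes a special case of Pieri's rule for multiplication by $s_{(1)}$. The first step is Frobenius reciprocity: for each $\mu \vdash n-1$,
\[
\langle \chi^\lambda \downarrow_{\S_{n-1}}^{\S_n}, \chi^\mu \rangle_{\S_{n-1}}
= \langle \chi^\lambda, \chi^\mu \uparrow_{\S_{n-1}}^{\S_n} \rangle_{\S_n},
\]
so that computing the desired restriction reduces to computing the induced character $\chi^\mu \uparrow_{\S_{n-1}}^{\S_n}$ and pairing it with $\chi^\lambda$.

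The second step uses the key ring-theoretic property of $\ch$: it intertwines the induction product (parabolic induction from $\S_a \times \S_b$ to $\S_{a+b}$) with multiplication in the ring $\Lambda$ of symmetric functions. Applied to the inclusion $\S_{n-1} \cong \S_{n-1} \times \S_1 \hookrightarrow \S_n$, this gives
\[
\ch(\chi^\mu \uparrow_{\S_{n-1}}^{\S_n}) = s_\mu \cdot s_{(1)}.
\]
Pieri's rule then expands the right-hand side as a sum of Schur functions $s_\nu$ over all $\nu \vdash n$ obtained from $\mu$ by adding a single cell. Since $\ch$ is an isometry from class functions on $\S_n$ (with the standard inner product) onto $\Lambda^n$ (with the Hall inner product) that sends $\chi^\nu$ to $s_\nu$, the inner product $\langle \chi^\lambda, \chi^\mu \uparrow \rangle_{\S_n}$ equals $1$ when $\lambda/\mu$ is a single cell and $0$ otherwise. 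Summing these multiplicities over $\mu$ yields the stated decomposition of $\chi^\lambda \downarrow_{\S_{n-1}}^{\S_n}$.

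The main obstacle is that both ingredients of the second step — the ring-isomorphism property of $\ch$ and Pieri's rule itself — are nontrivial results requiring independent proof (typically established via the Jacobi--Trudi identity, or via the RSK correspondence together with Theorem~\ref{G1}). A self-contained alternative would be a direct Specht module argument: choose a removable corner $c$ of $\lambda$ and define an $\S_{n-1}$-stable filtration of $S^\lambda$ according to the position of $n$ in each polytabloid, then identify the successive quotients with Specht modules $S^{\lambda \setminus c}$ as $c$ ranges over the removable corners. Verifying $\S_{n-1}$-stability of the filtration, together with the irreducibility and correct isomorphism type of each quotient, would be the technical heart of that alternative path; the characteristic-map route bypasses these checks at the cost of invoking Pieri's rule.
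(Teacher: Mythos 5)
The paper does not actually prove this statement: it quotes the Branching Rule as a classical result of Young, citing \cite[Theorem 9.2]{James}, so there is no internal proof to match your argument against. Your derivation is the standard symmetric-function route and it is correct in outline: Frobenius reciprocity reduces the restriction to computing $\langle \chi^\lambda, \chi^\mu \uparrow_{\S_{n-1}}^{\S_n} \rangle$, the intertwining of induction product with multiplication under $\ch$ gives $\ch(\chi^\mu \uparrow) = s_\mu s_{(1)}$, and Pieri's rule plus the isometry property of $\ch$ yields multiplicity $1$ exactly when $\lambda/\mu$ is a single cell. One small point worth making explicit: since a restricted character is automatically a genuine character (a nonnegative integer combination of irreducibles), knowing all the inner products does determine the decomposition, so the final ``summing over $\mu$'' step is legitimate. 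Your honest accounting of the dependencies is the right caveat --- with the paper's definition of $\ch$ (namely $\ch(\chi^\lambda)=s_\lambda$ extended linearly), the isometry is trivial but the ring-homomorphism property carries all the content, and Pieri's rule is itself a theorem; so your argument is a reduction to standard cited facts rather than a self-contained proof, exactly as the Specht-module alternative you sketch would be the self-contained route. For the role this theorem plays in the paper (a black-box classical input), either treatment is appropriate.
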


Viewing tableaux of shape $\mu$ as tableaux of shape $\lambda$ with the entry $n$ ``forgotten'', the Branching Rule may be restated as
\[
\ch(\chi^\lambda \downarrow_{\S_{n-1}}^{\S_n}) = \sum_{T \in SYT(\lambda)} \F_{n-1,\Des(T) \cap [n-2]}.
\]
Iteration immediately gives

\begin{corollary}\label{quasi_restriction}
For every $\lambda\vdash n$ and $m\le n$
\[
\ch(\chi^\lambda \downarrow_{\S_m}^{\S_n}) = \sum_{T \in \SYT(\lambda)} \F_{m, \Des(T) \cap [m-1]}.
\]
\end{corollary}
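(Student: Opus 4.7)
The plan is to derive the corollary by downward induction on $m$, or equivalently by iterating the restated Branching Rule displayed immediately before the corollary. The base case is $m = n$: here $\Des(T) \subseteq [n-1]$ for any $T \in \SYT(\lambda)$, so intersecting with $[n-1]$ is vacuous, and the assertion reduces to Gessel's identity $\ch(\chi^\lambda) = \sum_{T \in \SYT(\lambda)} \F_{n,\Des(T)}$ from Theorem~\ref{G1}.

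For the inductive step I assume the identity at level $m+1$,
\[
\ch(\chi^\lambda \downarrow_{\S_{m+1}}^{\S_n}) = \sum_{T \in \SYT(\lambda)} \F_{m+1, \Des(T) \cap [m]},
\]
and apply $\downarrow_{\S_m}^{\S_{m+1}}$ to both sides, using transitivity of restriction on the left. On the right, the restated Branching Rule instructs me to replace each $\F_{m+1, \Des(T) \cap [m]}$ by $\F_{m, \Des(T) \cap [m-1]}$; combinatorially, this amounts to ``forgetting'' the entry $m+1$ from (the restriction of) $T$ and intersecting the descent set with $[m-1]$. Summing over $T \in \SYT(\lambda)$ yields the desired formula at level $m$.

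The only item to verify is the purely combinatorial fact that, for $T \in \SYT(\lambda)$ and $T|_{[k]}$ denoting the subtableau on entries $\{1,\ldots,k\}$, one has $\Des(T|_{[k]}) = \Des(T) \cap [k-1]$; this is immediate from the definition of $\Des$, since whether $i \in \Des$ depends only on the relative row positions of $i$ and $i+1$, which are unaffected by the removal of larger entries. Granted this, iterating the restated Branching Rule $n - m$ times (each step peels off the current maximum entry) immediately produces the stated formula, so there is essentially no obstacle beyond this bookkeeping — the phrase ``Iteration immediately gives'' accurately describes the content of the proof.
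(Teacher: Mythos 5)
Your proposal is correct and takes essentially the same route as the paper, whose entire proof consists of the phrase ``Iteration immediately gives'' following the restated Branching Rule. Your downward induction on $m$, with base case Theorem~\ref{G1} and the bookkeeping fact $\Des(T|_{[k]}) = \Des(T) \cap [k-1]$, is precisely the intended fleshing-out of that iteration.
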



Recall the notation $P_\pi$ from Subsection~\ref{subsec:stat}.
For every standard Young tableau $T$ of size $n$, the set
\[
{\mathcal C}_T:=\{\pi\in \S_n:\ P_\pi=T\}
\]
is the {\em Knuth class} corresponding to $T$.
Fact~\ref{fact:RSK_DES} together with Theorem~\ref{G1} imply the following well-known result.

\begin{proposition}\label{Knuth_positive}
Knuth classes are Schur-positive.
\end{proposition}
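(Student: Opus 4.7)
The plan is to chain together the two facts the excerpt has already established: Fact~\ref{fact:RSK_DES} (RSK preserves descent sets via $\Des(\pi)=\Des(Q_\pi)$) and Theorem~\ref{G1} (Gessel's identity $\Q(\SYT(\lambda))=s_\lambda$). With these in hand, computing $\Q(\mathcal{C}_T)$ will reduce to a single Schur function, so Schur-positivity will be immediate.

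Concretely, I would begin from the definition
\[
\Q(\mathcal{C}_T) = \sum_{\pi \in \mathcal{C}_T} \F_{n,\Des(\pi)}
\]
and replace $\Des(\pi)$ by $\Des(Q_\pi)$ via Fact~\ref{fact:RSK_DES}. Next, since RSK is a bijection from $\S_n$ to pairs of SYT of the same shape, fixing $P_\pi = T$ of shape $\lambda$ makes the map $\pi \mapsto Q_\pi$ a bijection from $\mathcal{C}_T$ onto $\SYT(\lambda)$. Re-indexing the sum by $Q \in \SYT(\lambda)$ turns the expression into
\[
\Q(\mathcal{C}_T) = \sum_{Q \in \SYT(\lambda)} \F_{n,\Des(Q)} = \Q(\SYT(\lambda)),
\]
which, by Theorem~\ref{G1}, equals $s_\lambda$.

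Since a single Schur function is trivially Schur-positive, the conclusion $\Q(\mathcal{C}_T) = s_\lambda$ already gives the proposition. There is essentially no obstacle to overcome; the only thing to be careful about is verifying that the RSK restriction really is a bijection onto all of $\SYT(\lambda)$ (not a proper subset), but this is a standard property of RSK recorded in \cite[Ch.~3.1]{Sagan} or \cite[Ch.~7.11]{St2}. In particular, the proof also yields the sharper statement that $\Q(\mathcal{C}_T)$ depends only on the shape of $T$ and equals the Schur function indexed by that shape.
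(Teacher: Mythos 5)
Your proof is correct and follows exactly the route the paper indicates: it combines Fact~\ref{fact:RSK_DES} with Theorem~\ref{G1}, using that $\pi \mapsto Q_\pi$ bijects the Knuth class $\mathcal{C}_T$ onto $\SYT(\shape(T))$, to get $\Q(\mathcal{C}_T) = s_{\shape(T)}$. The paper states this implication without spelling out the details, so your write-up is simply a fuller version of the same argument.
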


See~\cite[Theorem 5.5]{Gessel-Reutenauer} and~\cite[Prop. 9.5]{Adin-R}.

\medskip

The following lemma will be used in Section~\ref{sec:Schur}.

\begin{lemma}[{\cite[Lemma 8.1]{ER}}]
\label{prop:first-last}
For every Schur-positive set $A\subseteq \S_n$, the (set) statistics
$\Des$ and $n-\Des$ are equi-distributed over $A$.
\end{lemma}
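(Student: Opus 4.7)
The plan is to expand $\Q(A)$ in two different ways — first via Schur-positivity together with Gessel's theorem (Theorem~\ref{G1}), and second directly as a sum over $A$ by descent set — and then match the resulting multisets of descent sets by invoking the Sch\"utzenberger evacuation on standard Young tableaux.

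First, since $A$ is Schur-positive I would write $\Q(A) = \sum_{\lambda \vdash n} c_\lambda s_\lambda$ with $c_\lambda \in \NN$. Theorem~\ref{G1} then rewrites this as
\[
\sum_{\pi \in A} \F_{n,\Des(\pi)} \; = \; \Q(A) \; = \; \sum_{\lambda \vdash n} c_\lambda \sum_{T \in \SYT(\lambda)} \F_{n,\Des(T)}.
\]
The fundamental quasi-symmetric functions $\{\F_{n,D} : D \subseteq [n-1]\}$ are linearly independent, so comparing coefficients shows that, as multisets of subsets of $[n-1]$,
\[
\{\Des(\pi) : \pi \in A\} \; = \; \bigsqcup_{\lambda \vdash n} c_\lambda \cdot \{\Des(T) : T \in \SYT(\lambda)\}.
\]

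Next, I would invoke the Sch\"utzenberger evacuation $T \mapsto T^{\mathrm{ev}}$: a shape-preserving involution on $\SYT(\lambda)$ under which $\Des(T^{\mathrm{ev}}) = \{n - i : i \in \Des(T)\}$. This gives a bijection on each $\SYT(\lambda)$ proving that $\Des$ and the complementary set $n - \Des := \{n-i : i \in \Des\}$ are equi-distributed within $\SYT(\lambda)$, hence within the multiset union displayed above, and therefore also within $A$.

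The one ingredient to invoke carefully is the descent-transformation rule for evacuation; this is a classical result of Sch\"utzenberger (see, e.g., Stanley, \emph{Enumerative Combinatorics II}, Exercise 7.73(e)), so the main work is simply assembling the three ingredients — Schur-positivity of $A$, Gessel's identification $s_\lambda = \sum_T \F_{n,\Des(T)}$, and linear independence of the $\F_{n,D}$ — and then transporting the evacuation symmetry from each fiber $\SYT(\lambda)$ back to $A$.
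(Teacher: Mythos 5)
Your argument is correct. The paper itself gives no proof of this lemma---it is quoted verbatim from \cite[Lemma 8.1]{ER}---and your chain of reasoning (Schur-positive expansion, Gessel's $s_\lambda=\sum_{T\in\SYT(\lambda)}\F_{n,\Des(T)}$, linear independence of the $\F_{n,D}$, and the descent-complementation property $\Des(T^{\mathrm{ev}})=\{n-i: i\in\Des(T)\}$ of Sch\"utzenberger evacuation on each $\SYT(\lambda)$) is exactly the standard proof and, in substance, the one given in \cite{ER}. As a side remark, your step comparing coefficients of the $\F_{n,D}$ is a purely linear identity, so the same argument shows that symmetry of $\Q(A)$ already suffices; Schur-positivity is not actually needed for this conclusion.
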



\section{The block number of a permutation}

\subsection{Definitions}\label{sec:block}

Block decomposition and direct sums of permutations appear in the
study of pattern avoiding classes~\cite{Albert-Atkinson05, AAV}.

\medskip


Let $\pi \in \S_m$ and $\sigma \in \S_n$. The {\em direct sum} of $\pi$ and $\sigma$ is the permutation $\pi \oplus \sigma \in \S_{m+n}$ defined by
\[
\pi \oplus \sigma :=
\begin{cases} 
\pi(i), & \text{if } i \leq m;  \\
\sigma(i-m) + m,  & \text{otherwise.}
\end{cases}
\]

For example, if $\pi = 312$ and $\sigma = 2413$ then $\pi \oplus \sigma = 3125746$; see Figure~\ref{fig:sum}. 

\begin{figure}[h]
\begin{center}
\includegraphics[scale=0.5]{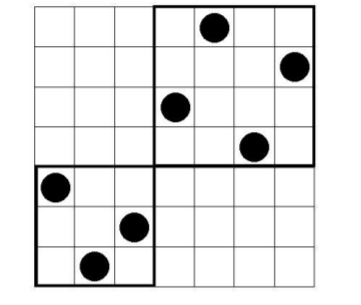}
\caption{The permutation $312\oplus 2413=3125746$ 
}\label{fig:sum}
\end{center}
\end{figure}

A nonempty permutation which is not the direct sum of two nonempty permutations is called {\em $\oplus$-irreducible}. 
Each permutation $\pi$ can be written uniquely as a direct sum of $\oplus$-irreducible ones, called the {\em blocks} of $\pi$; their number, denoted by $\bl(\pi)$, is the {\em block number} of $\pi$.


\begin{exa}
$\bl(45321)=1$, $\bl(31254)=2$, and $\bl(1234)=4$.                               
\end{exa}          


\begin{obs}\label{obs:bl}
The block number of a permutation $\pi\in \S_n$ is 
\[
\bl(\pi)= 1 + |\{1 \le i \le n-1 \,:\, \max(\pi(1), \ldots, \pi(i)) <
\min(\pi(i+1), \ldots, \pi(n))\}|.
\]
\end{obs}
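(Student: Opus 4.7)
The plan is to show that the set of positions $i$ with $\max(\pi(1),\ldots,\pi(i)) < \min(\pi(i+1),\ldots,\pi(n))$ is in bijection with the ``gaps'' between consecutive blocks in the $\oplus$-decomposition of $\pi$, so that if $\pi$ has $k$ blocks there are exactly $k-1$ such positions.

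First I would establish the elementary reformulation: for $1 \le i \le n-1$,
\[
\max(\pi(1),\ldots,\pi(i)) < \min(\pi(i+1),\ldots,\pi(n)) \iff \{\pi(1),\ldots,\pi(i)\} = [i].
\]
The forward direction uses that $\{\pi(1),\ldots,\pi(n)\}=[n]$ together with the strict inequality: every value among $\pi(1),\ldots,\pi(i)$ is strictly less than every value among $\pi(i+1),\ldots,\pi(n)$, which forces the first $i$ values to be exactly $\{1,2,\ldots,i\}$. The reverse direction is immediate, since then the last $n-i$ values are $\{i+1,\ldots,n\}$.

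Next I would show that the positions $i$ with $\{\pi(1),\ldots,\pi(i)\}=[i]$ are precisely the ``splitting points'' of the direct-sum decomposition. On the one hand, any such $i$ produces a factorisation $\pi = \alpha \oplus \beta$ where $\alpha \in \S_i$ is the one-line word $(\pi(1),\ldots,\pi(i))$ and $\beta \in \S_{n-i}$ is $(\pi(i+1)-i,\ldots,\pi(n)-i)$. On the other hand, if $\pi = \sigma_1 \oplus \cdots \oplus \sigma_k$ is the unique decomposition into $\oplus$-irreducible blocks with block sizes $n_1,\ldots,n_k$, then the partial sums $i_j := n_1 + \cdots + n_j$ for $1 \le j \le k-1$ all satisfy $\{\pi(1),\ldots,\pi(i_j)\} = [i_j]$, and conversely any splitting point of $\pi$ must be one of these partial sums; otherwise we could refine the decomposition of some $\sigma_j$, contradicting its $\oplus$-irreducibility.

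Combining the two steps, the number of integers $i \in [n-1]$ satisfying the stated max/min inequality equals the number $k-1$ of gaps between the $k$ blocks of $\pi$, which gives $\bl(\pi) = 1 + |\{i : \max(\pi(1),\ldots,\pi(i)) < \min(\pi(i+1),\ldots,\pi(n))\}|$. No step is really an obstacle here; the only thing to be slightly careful about is the uniqueness of the $\oplus$-decomposition into irreducibles, which is asserted in Subsection~\ref{sec:block} and can be seen by induction on $n$: the smallest splitting point determines the first block $\sigma_1$, and then one recurses on the rest.
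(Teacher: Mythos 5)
Your proof is correct and complete. The paper states this as an unproved Observation, so there is no argument to compare against; your chain of reasoning (the max/min condition is equivalent to $\{\pi(1),\ldots,\pi(i)\}=[i]$, such positions are exactly the splitting points of the $\oplus$-decomposition, and the splitting points are the partial sums of the block sizes by irreducibility) is precisely the standard justification the authors leave implicit.
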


The following equivalent definition was proposed by Michael Joseph and Tom Roby~\cite{personal}.
\begin{obs}
\[
\bl(\pi) = |\{1 \le i \le n \,:\, (\forall j \le i)\, \quad \pi(j) \le i \}|.
\]
\end{obs}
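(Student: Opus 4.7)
The plan is to reduce this identity to the preceding Observation~\ref{obs:bl} by showing that, for each $1 \le i \le n-1$, the indicator conditions match. Let $T := \{1 \le i \le n : (\forall j \le i)\ \pi(j) \le i\}$ denote the set whose cardinality is claimed to equal $\bl(\pi)$.

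First I would rewrite the defining condition of $T$ in a more convenient form. The requirement that $\pi(j) \le i$ for all $j \le i$ is the same as $\max(\pi(1),\ldots,\pi(i)) \le i$. The key observation is that since $\pi(1),\ldots,\pi(i)$ are $i$ distinct elements of $[n]$, the bound $\max \le i$ forces $\{\pi(1),\ldots,\pi(i)\} = [i]$ by pigeonhole (and in particular the maximum equals exactly $i$).

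Next I would handle the boundary case and extract the count. The index $i=n$ always belongs to $T$ because $\pi$ is a permutation of $[n]$, so
\[
|T| \;=\; 1 + \bigl|\{1 \le i \le n-1 : \{\pi(1),\ldots,\pi(i)\} = [i]\}\bigr|.
\]
Finally I would compare this with Observation~\ref{obs:bl}. The condition $\max(\pi(1),\ldots,\pi(i)) < \min(\pi(i+1),\ldots,\pi(n))$ appearing there says that every value among the first $i$ positions is smaller than every value among the last $n-i$ positions; since $\{\pi(1),\ldots,\pi(n)\} = [n]$, this is equivalent to $\{\pi(1),\ldots,\pi(i)\} = [i]$. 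Thus the two index sets being counted are identical, and $|T| = \bl(\pi)$.

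There is no real obstacle: the only substantive step is the pigeonhole argument promoting $\max \le i$ to the set equality $\{\pi(1),\ldots,\pi(i)\} = [i]$, which is immediate for permutations. Everything else is a direct rewriting.
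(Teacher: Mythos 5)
Your proof is correct. The paper states this observation without proof (it is attributed to Joseph and Roby as an immediate reformulation), so there is no argument to compare against; your reduction to Observation~\ref{obs:bl} --- using the pigeonhole step that $\max(\pi(1),\ldots,\pi(i)) \le i$ forces $\{\pi(1),\ldots,\pi(i)\}=[i]$, which is in turn equivalent to the prefix-maximum being smaller than the suffix-minimum --- is a clean and complete way to justify the equivalence.
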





\subsection{Counting 321-avoiding permutations by block number}\label{sec:count}

Recall from Subsection~\ref{sec:prel_Catalan}  
the Catalan generating function
\[
c(x) := \sum\limits_{n=0}^{\infty}{C_n x^n}.
\] 



\begin{proposition}\label{prop:Bl_Catalan}
For any fixed positive integer $k$, the ordinary generating function for the number of $321$-avoiding permutations in $\S_n$ $(n \ge k)$ with exactly $k$ blocks is $(xc(x))^k$.
\end{proposition}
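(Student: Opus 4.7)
The plan is to exploit the fact that the block decomposition is compatible with $321$-avoidance, reducing the problem to finding the generating function $A(x)$ for $\oplus$-irreducible $321$-avoiding permutations and then raising it to the $k$-th power.

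First I would verify that, for any permutations $\pi_1,\ldots,\pi_k$, the direct sum $\pi_1 \oplus \cdots \oplus \pi_k$ is $321$-avoiding if and only if each $\pi_i$ is $321$-avoiding. This follows from the definition of the direct sum: every entry of $\pi_j$ is strictly greater than every entry of $\pi_i$ for $i<j$, so any decreasing subsequence of length $3$ must be contained in a single summand. Consequently, a $321$-avoiding permutation with exactly $k$ blocks is precisely an ordered $k$-tuple of $\oplus$-irreducible $321$-avoiding permutations (with sizes summing to $n$), and so the generating function for such permutations is $A(x)^k$, where $A(x) := \sum_{n \ge 1} a_n x^n$ and $a_n$ is the number of $\oplus$-irreducible $321$-avoiding permutations in $\S_n$.

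Next I would identify $A(x)$ explicitly. Every nonempty $321$-avoiding permutation decomposes uniquely as a direct sum of $\oplus$-irreducible $321$-avoiding permutations, and the empty permutation contributes $1$, so
\[
c(x) = \sum_{k \ge 0} A(x)^k = \frac{1}{1-A(x)},
\]
which gives $A(x) = 1 - 1/c(x)$. Using the standard functional equation $c(x) = 1 + x c(x)^2$ for the Catalan generating function, we compute
\[
1 - \frac{1}{c(x)} = \frac{c(x)-1}{c(x)} = \frac{x c(x)^2}{c(x)} = x c(x),
\]
so $A(x) = x c(x)$ and the generating function for $321$-avoiding permutations with exactly $k$ blocks is $(xc(x))^k$, as claimed.

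There is no real obstacle here; the only step that requires a brief check is the ``if and only if'' compatibility between $321$-avoidance and direct sum decomposition, and the rest is a short generating-function calculation using the Catalan functional equation. (As a sanity check, the coefficient of $x^n$ in $(xc(x))^k$ is the $k$-fold Catalan number $C_{n,k}$ from Subsection~\ref{sec:prel_Catalan}, which matches Proposition~\ref{prop1} and foreshadows the bijection to be used in later sections.)
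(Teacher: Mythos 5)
Your proposal is correct and follows essentially the same route as the paper: both reduce to the generating function for $\oplus$-irreducible ($=$ one-block) $321$-avoiding permutations, use the compatibility of $321$-avoidance with the block decomposition to get the $k$-th power, and identify that generating function as $xc(x)$ via $c(x)=1/(1-A(x))$ together with the Catalan functional equation. The only cosmetic difference is that you solve explicitly for $A(x)=1-1/c(x)$ and simplify, whereas the paper compares $1/(1-d(x))$ with $1/(1-xc(x))$; these are the same computation.
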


\begin{proof}

Let $d(x)$ be the generating function for $321$-avoiding permutations with exactly one block. 
A permutation is $321$-avoiding if and only if all its blocks are, so that the generating function for $321$-avoiding permutations with exactly $k$ blocks is $d(x)^k$. We therefore only need to show that 
\[
d(x) = x c(x).
\]
Since each permutation has some number $k \ge 0$ of blocks, with $k = 0$ only for the empty permutation, 
we see immediately that
\[
c(x) = 1 + \sum_{k=1}^{\infty} d(x)^k = \frac{1}{1 - d(x)}.
\]
On the other hand, it is well known that $c(x)$ satisfies the equation
\[
x c(x)^2 - c(x) + 1 = 0,
\]
or equivalently
\[
c(x) = \frac{1}{1 - x c(x)}.
\]
Comparison of the two formulas shows that indeed $d(x) = x c(x)$.
\end{proof}

\bigskip
Combining Proposition~\ref{prop:Bl_Catalan} with Proposition~\ref{prop1} we deduce

\begin{corollary}\label{t:cardinalities}
For any integers $1\le k\le n$,
\begin{eqnarray*}
|\{\pi \in \S_n(321) \,:\, \bl(\pi) = k\}| 
&=& \{\pi \in \S_n(321) \,:\, \ldes(\pi^{-1}) = n-k\}| \\
&=& |\SYT(n-1,n-k)|.
\end{eqnarray*}
\end{corollary}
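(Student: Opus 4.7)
The plan is to deduce this corollary as a direct synthesis of the two preceding results, with essentially no new content beyond reading off coefficients of a generating function. First I would invoke Proposition~\ref{prop:Bl_Catalan}, which asserts that for each fixed $k \ge 1$ the ordinary generating function
\[
\sum_{n \ge k} |\{\pi \in \S_n(321) \,:\, \bl(\pi) = k\}| \, x^n = (xc(x))^k.
\]
Then I would recall from Subsection~\ref{sec:prel_Catalan} that the $n$-th $k$-fold Catalan number $C_{n,k}$ is \emph{defined} as $[x^n](xc(x))^k$. Extracting coefficients gives immediately
\[
|\{\pi \in \S_n(321) \,:\, \bl(\pi) = k\}| = C_{n,k}.
\]

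Next I would cite Proposition~\ref{prop1}, which already provides the other two identifications:
\[
|\{\pi \in \S_n(321) \,:\, \ldes(\pi^{-1}) = n-k\}| = C_{n,k}
\qquad \text{and} \qquad
|\SYT(n-1, n-k)| = C_{n,k}.
\]
Chaining these three equalities through the common value $C_{n,k}$ yields the corollary.

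There is no genuine obstacle here; the whole point of organizing the earlier material through the unifying parameter $C_{n,k}$ was precisely to make this corollary a one-line consequence. The only care required is a small bookkeeping check: in Proposition~\ref{prop:Bl_Catalan} the generating function is stated for $n \ge k$, while the corollary is asserted for $1 \le k \le n$, so I would just note that the ranges agree and that the boundary case $k = n$ (the identity permutation, $\ldes(\pi^{-1}) = 0$, and the two-row shape $(n-1,0)$ interpreted as the single row $(n-1)$) is consistent, with $C_{n,n} = 1$ on all three sides.
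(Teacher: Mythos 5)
Your proposal matches the paper's argument exactly: the paper states this corollary as an immediate consequence of combining Proposition~\ref{prop:Bl_Catalan} (which identifies the block-number count with $[x^n](xc(x))^k = C_{n,k}$) with Proposition~\ref{prop1} (which identifies the other two quantities with $C_{n,k}$). Your extra check of the boundary case $k=n$ is correct and harmless.
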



Corollary~\ref{t:cardinalities} will be refined in this paper, see Theorem~\ref{main1_introduction}
and Corollaries~\ref{t:des_equid} and~\ref{t:Bl_SYT} below.

\section{Equi-distribution}\label{sec:equidistribution}

\begin{defn}
For $1 \le k \le n$ denote  
\[
Bl_{n,k}:=\{\pi\in \S_n(321):\ \bl(\pi) = k\}
\]
and
\[
L_{n,k} = \{\pi \in \S_n(321) :\ \ldes(\pi^{-1})=k\}. \]
\end{defn}

In this section we prove Theorem~\ref{main1_introduction} by presenting a left-to-right-maxima-preserving bijection from $Bl_{n,k}$ to $L_{n,n-k}$.


\begin{definition}\label{def:recursion}
Define maps $f_n: \S_n(321) \to \S_n(321)$, recursively, for all $n \ge 1$. For $n = 1$ the definition is obvious, since $\S_1(321)$ consists of a unique permutation. For $\pi \in \S_n(321)$, $n \ge 2$, the recursive definition of $f_n(\pi)$ depends on $k := \bl(\pi)$ and on the locations of the letters $n-1$ and $n$ in $\pi$.
Distinguish the following three cases: 

\begin{description}
\item[{\bf Case A}]
$\pi^{-1}(n)=n$,\ i.e., $n$ is in the last position.\\  
Then: delete $n$, apply $f_{n-1}$, and
insert $n$ at the last position.

\item[{\bf Case B}] 
$\pi^{-1}(n-1)< \pi^{-1}(n) < n$, i.e., $n$ is to the right of $n-1$ but not in the last position.\\ 
Then: delete $n$, apply $f_{n-1}$,  insert $n$ at the same
position as in $\pi$, and multiply on the left by
the transposition $(n-k-1,n-k)$.

\item[{\bf Case C}] 
$\pi^{-1}(n) < \pi^{-1}(n-1)$, i.e.,
$n-1$ is to the right of $n$ (and must be the last letter, since $\pi$ is $321$-avoiding).\\
Then: let $\pi' := (n-1,n)\pi$, define $f_n(\pi')$ according to case A above, and multiply it on the left by the cycle $(n-k,n-k+1, ...,n)$.
\end{description}
\end{definition}

\begin{remark}\label{rem:sequence}
This recursive definition yields a sequence of permutations $(\pi_n,\pi_{n-1},\dots,\pi_1)$, starting with $\pi_n = \pi$.
For each $2 \le i \le n$, $\pi_{i-1} \in \S_{i-1}$ is obtained from $\pi_{i} \in \S_i$ by deleting $i$ from $\pi_{i}$ (in cases A and B) or by deleting $i$ from $(i-1,i)\pi_{i}$ (in case C).
To recover $f_i(\pi_{i})$ from $f_{i-1}(\pi_{i-1})$, the letter $i$ is inserted exactly where it was deleted (for example --- in the last position, in cases A and C), and then the permutation is multiplied, on the left, by a suitable cycle. 
\end{remark}

\begin{example}\label{main example}
Let $\pi=31254786 \in \S_8$, so that $\bl(\pi)=3$ and $\Ltr(\pi)=\{1,4,6,7\}$.
The recursive process is illustrated by the following diagram,
where the arrow $\pi_{i} \to \pi_{i-1}$ is decorated by the case and by the corresponding cycle.
\begin{eqnarray*}
\pi=\pi_8=31254786 &\xrightarrow[(45)]{B}& \pi_7=3125476 \xrightarrow[(4567)]{C} \pi_6=312546 \\
&\xrightarrow[\quad]{A}& \pi_5=31254 \xrightarrow[(345)]{C} \pi_4=3124\xrightarrow[\quad]{A} \pi_3=312 \\
&\xrightarrow[(23)]{C}& \pi_2=21\xrightarrow[(12)]{C} \pi_1=1 .
\end{eqnarray*}


\begin{eqnarray*}
f_1(\pi_1) = 1 &\xrightarrow{(12)}& f_2(\pi_2)=21 \xrightarrow{(23)} f_3(\pi_3)=312 \xrightarrow{\quad} f_4(\pi_4)=3124 \\
&\xrightarrow{(345)}& f_5(\pi_5)=41253 
\xrightarrow{\quad} f_6(\pi_6)=412536 \\
&\xrightarrow{(4567)}& f_7(\pi_7)=5126374 \xrightarrow{(45)} f_8(\pi)=f_8(\pi_8)=41263785.
\end{eqnarray*}


Note that here $\ldes(f_8(\pi)^{-1}) = 5 = 8-\bl(\pi)$ and $\Ltr(f_8(\pi)) = \{1,4,6,7\} = \Ltr(\pi)$.
\end{example}

Our main claim is

\begin{theorem}\label{t:bijection}
For each $1 \le k \le n$, the map $f_n$ defined above is a left-to-right-maxima-preser\-ving bijection from $Bl_{n,k}$ onto $L_{n,n-k}$.  
\end{theorem}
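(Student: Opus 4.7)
The plan is to prove the theorem by induction on $n$, simultaneously establishing for every $\pi \in \S_n(321)$ the three properties: (i) $f_n(\pi) \in \S_n(321)$; (ii) $\ldes(f_n(\pi)^{-1}) = n - \bl(\pi)$, i.e., $f_n$ sends $Bl_{n,k}$ into $L_{n,n-k}$; and (iii) $\Ltr(f_n(\pi)) = \Ltr(\pi)$. Bijectivity from $Bl_{n,k}$ onto $L_{n,n-k}$ then follows from the cardinality equality $|Bl_{n,k}| = |L_{n,n-k}|$ of Corollary~\ref{t:cardinalities} together with injectivity, which one obtains by constructing the inverse map step-by-step: from $f_n(\pi)$ one determines which of the three cases was applied (using the position of $n$ and the value at position $n$), undoes the final left-multiplication, and recurses on the resulting permutation of smaller size.

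A preliminary lemma records how $\bl$ changes from $\pi$ to its predecessor. Case~A gives $\bl(\pi) = \bl(\pi') + 1$ trivially, and Case~C gives $\bl(\pi') = \bl(\pi) + 1$, because the swap $\pi' = (n-1,n)\pi$ places $n$ at the last position and creates exactly one new split, at position $n-1$. The delicate case is~B: the $321$-avoiding hypothesis forces the suffix $\pi(j+1), \ldots, \pi(n)$ (with $j := \pi^{-1}(n)$) to be increasing and to lie strictly below $n-1$, and a direct tracking of splits then shows that all splits of $\pi$ occur at positions strictly below $j$ and coincide with those of $\pi'$, whence $\bl(\pi') = \bl(\pi) = k$.

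Case~A is then immediate: $f_n(\pi) = f_{n-1}(\pi') \oplus 1$ inherits (i) and (iii) from induction, and appending $n$ at the last position does not affect the descent set of the inverse, so $\ldes(f_n(\pi)^{-1}) = (n-1) - \bl(\pi') = n - \bl(\pi)$. For Cases~B and C, let $\sigma$ denote the intermediate permutation before the final left-multiplication by $\tau$ and use the identity $(\tau\sigma)^{-1} = \sigma^{-1}\tau^{-1}$: left-multiplication by $\tau$ merely permutes the entries of $\sigma^{-1}$ at positions in the support of $\tau$. Thus in Case~B the transposition $(n-k-1, n-k)$ swaps the entries of $\sigma^{-1}$ at positions $n-k-1$ and $n-k$, and in Case~C the cycle $(n-k, n-k+1, \ldots, n)$ cyclically rotates the entries of $\sigma^{-1}$ at positions $n-k, \ldots, n$. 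In either case one aims to show this operation places the last descent of the inverse at exactly position $n-k$, preserves $\Ltr$, and maintains $321$-avoidance.

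The main obstacle is Case~B: one must pin down the positions of the specific values $n-k-1$ and $n-k$ inside $\sigma$ --- using the induction hypothesis, which yields $\ldes(f_{n-1}(\pi')^{-1}) = n-1-k$ and hence determines the relative positions of the values $n-k, n-k+1, \ldots, n-1$ in $f_{n-1}(\pi')$ --- and then verify that swapping them (a) preserves $\Ltr$, via a local analysis of how the maxima of the prefixes of $\sigma$ compare to $n-k-1$ and $n-k$; (b) keeps $f_n(\pi)$ in $\S_n(321)$, which follows cleanly from the characterization that a permutation is $321$-avoiding iff the restrictions of its one-line notation to $\Ltr$ and its complement are both monotone increasing; and (c) produces a descent at position $n-k$ of the inverse while erasing any descent to the right of it. Case~C proceeds along the same template via Case~A applied to $\pi'$, with a parallel descent-tracking argument for the cyclic shift in place of the transposition.
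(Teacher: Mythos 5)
Your overall plan for the forward direction --- induction on $n$, simultaneously tracking $321$-avoidance, the identity $\ldes(f_n(\pi)^{-1}) = n-\bl(\pi)$, and preservation of $\Ltr$, supported by a bookkeeping lemma for how $\bl$ changes from $\pi$ to its predecessor --- is exactly the paper's, and your Case~B/C descent-tracking sketch, while not fully carried out (in Case~B one must actually prove that the last letter of $f_{n-1}(\pi_{n-1})$ is $n-k-1$, which requires knowing that $f$ preserves the position of the largest letter), follows the same lines as the paper's argument and would go through. The reduction of bijectivity to injectivity, via $|Bl_{n,k}| = |L_{n,n-k}|$ or equivalently via the fact that the sets $Bl_{n,k}$ and the sets $L_{n,n-k}$ each partition $\S_n(321)$, is also fine.

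The gap is in your recipe for inverting $f_n$: you claim that the case (A, B or C) applied to $\pi$ can be read off from $\sigma := f_n(\pi)$ ``using the position of $n$ and the value at position $n$.'' The position of $n$ does detect case A ($\sigma^{-1}(n)=n$), but neither datum separates B from C: in both of those cases one has $\sigma^{-1}(n)=\pi^{-1}(n)<n$ and $\sigma(n)=n-k$ (in case B because the last letter of $f_{n-1}(\pi_{n-1})$ is $n-k-1$ and the final transposition turns it into $n-k$; in case C because the final cycle sends the appended letter $n$ to $n-k$). Concretely, for $n=4$ and $k=2$, the permutation $\pi_1 = 1342$ falls under case B with $f_4(\pi_1)=1342$, while $\pi_2=2143$ falls under case C with $f_4(\pi_2)=3142$; both images have $4$ in position $3$ and last letter $2$, so your two invariants coincide although the cases differ. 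A finer invariant is needed: the paper shows that, when $\sigma^{-1}(n)<n$, case B occurs exactly when $n-k-1$ precedes $n-k+1$ in $\sigma$, and case C when $k=n-1$ or $n-k+1$ precedes $n-k-1$, where $k$ is recovered from $k = n-\ldes(\sigma^{-1})$. With that substitution your inversion step becomes well defined and the injectivity induction closes; as written, it is ambiguous.
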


The proof of Theorem~\ref{t:bijection} will proceed through a sequence of lemmas and observations. 
The first observation follows from Remark~\ref{rem:sequence}.

\begin{obs}\label{t:321}
If $\pi \in \S_n(321)$ then $\pi_{n-1} \in \S_{n-1}(321)$.
\end{obs}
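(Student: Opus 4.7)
The plan is to split into the three cases of Definition~\ref{def:recursion} and check $321$-avoidance of $\pi_{n-1}$ in each. Throughout, it helps to recall that the deletion of the largest letter from a $321$-avoiding permutation is automatically $321$-avoiding: any would-be pattern in the smaller permutation would, via the inclusion of positions, give a pattern in the original.

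In Cases A and B, the construction in Remark~\ref{rem:sequence} literally deletes the letter $n$ from $\pi$, so $\pi_{n-1}$ is a subword of $\pi$ (on positions $[n-1]$ in Case B, and on positions $[n-1]$ in Case A as well, since $n$ sits at the last position). Therefore, by the observation of the previous paragraph, $\pi_{n-1}\in \S_{n-1}(321)$.

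The work lies in Case C, where $\pi^{-1}(n)<\pi^{-1}(n-1)$. The first step I would carry out is to show that $n-1$ must be the last letter of $\pi$; this is a short pattern-avoidance argument: if $\pi^{-1}(n-1)<n$, pick any position $j>\pi^{-1}(n-1)$, note $\pi(j)<n-1$, and observe that the triple $(\pi^{-1}(n),\pi^{-1}(n-1),j)$ would realize a $321$-pattern in $\pi$. Consequently, $\pi'=(n-1,n)\pi$ has $n$ in the last position, and the (Case A) deletion then produces $\pi_{n-1}$. Concretely, $\pi_{n-1}$ is obtained from $\pi$ by discarding the last entry (which has value $n-1$) and relabelling the value $n$ to $n-1$ at its original position.

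The second step is to verify $\pi_{n-1}\in \S_{n-1}(321)$ from this description. Any hypothetical $321$-pattern at positions $a<b<c$ in $\pi_{n-1}$ is examined according to whether the value $n-1$ (the maximum of $\pi_{n-1}$) appears among the three entries. If it does not, the same three positions and values form a $321$-pattern in $\pi$, a contradiction. If $n-1$ does appear, it must be the \emph{largest} entry of the pattern, hence it sits at position $a$; but at that position $\pi$ had value $n>n-1>\pi_{n-1}(b)=\pi(b)>\pi_{n-1}(c)=\pi(c)$, again producing a $321$-pattern in $\pi$. Both alternatives are impossible, which closes the argument. The only delicate point of the whole proof is the relabelling in Case C; once it is made explicit, the pattern-avoidance check is immediate.
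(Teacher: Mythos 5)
Your proof is correct and follows essentially the same route as the paper, which simply asserts that the observation ``follows from Remark~\ref{rem:sequence}'' (i.e., that $\pi_{n-1}$ arises by deleting the largest letter, possibly after the swap $(n-1,n)\pi$ in Case~C) without spelling out the details. Your elaboration --- in particular the verification that $n-1$ must be the last letter in Case~C and the relabelling check --- is exactly the argument the authors leave implicit.
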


This is actually needed for a successful recursion in Definition~\ref{def:recursion}, since $\pi \in \S_n(321)$ is needed to guarantee that $\pi^{-1}(n-1) = n$ in case C. 


\begin{lemma}\label{t:bl}
Let $\pi \in Bl_{n,k}$ ($n \ge 2$).
Then
\[
\pi_{n-1} \in \begin{cases}
Bl_{n-1,k-1}, & \text{if } \pi^{-1}(n) = n \text{ (case A);}\\
Bl_{n-1,k}, & \text{otherwise.}
\end{cases}
\]
\end{lemma}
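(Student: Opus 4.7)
The plan is a case analysis following Definition~\ref{def:recursion}. Write the block decomposition $\pi = \pi^{(1)} \oplus \cdots \oplus \pi^{(k)}$, let the last block $\pi^{(k)}$ occupy positions and values $\{a+1,\dots,n\}$, and let $\sigma$ denote the $\oplus$-irreducible $321$-avoiding permutation of $[m]$, with $m := n-a$, obtained from $\pi^{(k)}$ by subtracting $a$. Every operation appearing in Definition~\ref{def:recursion} touches only positions at or near $n$, so it leaves $\pi^{(1)},\dots,\pi^{(k-1)}$ untouched; it therefore suffices to track the last block. In Case~A, $\pi^{(k)} = (n)$ is itself a singleton block that is deleted, yielding $\bl(\pi_{n-1}) = k-1$. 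In Cases~B and~C, I will show that the operation replaces $\pi^{(k)}$ by a single $\oplus$-irreducible summand of size $m-1$, which gives $\bl(\pi_{n-1}) = k$. (Note that in these two cases $n-1$ must lie in the last block: otherwise the last block would be $(n)$, but both hypotheses preclude $\pi^{-1}(n) = n$.)

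For Case~B, the last segment of $\pi_{n-1}$ corresponds to the permutation $\sigma' \in \S_{m-1}$ obtained from $\sigma$ by deleting the letter $m$. Assume, for contradiction, $\sigma'(\{1,\dots,i\}) = \{1,\dots,i\}$ for some $1 \le i \le m-2$. If $\sigma^{-1}(m) > i$, then $\sigma$ and $\sigma'$ agree on positions $\{1,\dots,i\}$, so $\sigma(\{1,\dots,i\}) = \{1,\dots,i\}$, contradicting the $\oplus$-irreducibility of $\sigma$. Otherwise $\sigma^{-1}(m) \le i$; positions $\{1,\dots,i\}$ in $\sigma'$ correspond to $\{1,\dots,i+1\} \setminus \{\sigma^{-1}(m)\}$ in $\sigma$, forcing
\[
\sigma(\{1,\dots,i+1\}) = \{1,\dots,i\} \cup \{m\}.
\]
But the Case~B hypothesis $\sigma^{-1}(m-1) < \sigma^{-1}(m) \le i$ places $m-1$ in this set, so $m-1 \le i$, contradicting $i \le m-2$. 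For Case~C, $321$-avoidance forces $\pi^{-1}(n-1) = n$, so the last letter of $\sigma$ is $m-1$ and $\sigma^{-1}(m) < m$; the relevant operation deletes position $m$ of $\sigma$ and relabels the entry $m$ as $m-1$ at its position, producing a permutation $\sigma' \in \S_{m-1}$. A putative split $\sigma'(\{1,\dots,i\}) = \{1,\dots,i\}$ with $i \le m-2$ is again ruled out by the same dichotomy: if $\sigma^{-1}(m) > i$ then $\sigma(\{1,\dots,i\}) = \{1,\dots,i\}$, contradicting $\oplus$-irreducibility of $\sigma$; if $\sigma^{-1}(m) \le i$ then the relabelled entry $m-1$ sits inside $\sigma'(\{1,\dots,i\}) = \{1,\dots,i\}$, forcing $m-1 \le i$ and again contradicting $i \le m-2$.

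The main obstacle is the verification, in Cases~B and~C, that removing the top letter from the last block does not cause that block to break up into several smaller $\oplus$-summands; this is the only step in which $321$-avoidance and the precise location hypotheses on $n-1,n$ are genuinely used. Everything else is the straightforward bookkeeping that the blocks $\pi^{(1)},\dots,\pi^{(k-1)}$, sitting at positions $\le a$, are completely unaffected by any modification made at positions $\ge a+1$.
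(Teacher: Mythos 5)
Your proof is correct, and it reaches the same three conclusions by a somewhat different mechanism than the paper. Both arguments split into the cases of Definition~\ref{def:recursion} and observe that only the last block is affected, but you then pass to the standardized last summand $\sigma\in\S_m$ and prove, by contradiction on a putative splitting point $i\le m-2$, that the modified summand $\sigma'$ remains $\oplus$-irreducible, invoking uniqueness of the decomposition into irreducibles to get $\bl(\pi_{n-1})=k$. The paper instead counts breakpoints directly via Observation~\ref{obs:bl}: in Case~B it notes that the last letter of $\pi$ is smaller than $n-1$, so every position from $\pi^{-1}(n-1)$ onward fails the breakpoint condition both before and after deleting $n$; in Case~C it first records that $(n-1,n)\pi$ has exactly one more block than $\pi$ (the new trailing singleton $\{n\}$) and then deletes that singleton. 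The paper's version is shorter; yours makes more explicit exactly where the hypotheses $\pi^{-1}(n-1)<\pi^{-1}(n)$ (Case~B) and $\pi^{-1}(n-1)=n$ (Case~C) enter, namely in ruling out the sub-case where the deleted letter $m$ lies inside the putative initial segment. Your parenthetical verification that $n-1$ belongs to the last block in Cases~B and~C is a detail the paper leaves implicit but which your translation to $\sigma$ genuinely requires, so it is good that you included it.
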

\begin{proof}
First of all, $\pi_{n-1} \in \S_{n-1}(321)$ by Observation~\ref{t:321}.

If $\pi^{-1}(n) = n$ (case A) then $\{n\}$ is a block of $\pi$, and deleting $n$ reduces the number of blocks by $1$.

If $\pi^{-1}(n-1) < \pi^{-1}(n) < n$ (case B) then $n-1 > \pi(n)$. All the indices from $\pi^{-1}(n-1)$ to the end of the permutation belong to the same block, even after deleting the letter $n$, so that $\bl(\pi_{n-1}) = \bl(\pi)$.

Finally, if $\pi^{-1}(n) < \pi^{-1}(n-1) = n$ (case C) then $\bl[(n-1,n) \pi] = \bl(\pi) + 1$, and deleting $n$ from the last position in $(n-1,n) \pi$ gives $\bl(\pi_{n-1}) = \bl(\pi)$.

\end{proof}

\begin{lemma}\label{t:keep_n}
If $\pi \in \S_n(321)$ then $f_n(\pi)^{-1}(n) = \pi^{-1}(n)$.  
\end{lemma}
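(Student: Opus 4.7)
The plan is to argue by induction on $n$, handling separately the three cases of Definition~\ref{def:recursion}. The base case $n = 1$ is trivial, so the real work is in the inductive step. Fix $\pi \in \S_n(321)$ with $n \ge 2$ and set $k := \bl(\pi)$.

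For Case A, essentially nothing needs doing: the recipe deletes $n$ from the last position, applies $f_{n-1}$ to a permutation in $\S_{n-1}$ (which cannot affect the value $n$), and reinserts $n$ at the last position. So $f_n(\pi)^{-1}(n) = n = \pi^{-1}(n)$ immediately.

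For Case B, after deletion, recursion, and reinsertion of $n$ at position $\pi^{-1}(n)$, one obtains an intermediate permutation $\sigma$ with $\sigma^{-1}(n) = \pi^{-1}(n)$. Then $f_n(\pi) = (n-k-1,\, n-k) \cdot \sigma$, and left multiplication by this transposition swaps only the values $n-k-1$ and $n-k$, both of which lie strictly below $n$ since $k \ge 1$. Hence the position of $n$ is unaltered. A brief verification that $n-k-1 \ge 1$ will also be needed; this reduces to observing that Case B cannot occur when $k = n-1$, since in that case the last block would have size $2$ and its two letters would be forced into the pattern $21$, i.e.\ Case C.

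Case C is the main obstacle and requires the most care. The idea is to first replace $\pi$ by $\pi' := (n-1, n) \pi$, which swaps the values $n-1$ and $n$ so that $\pi'^{-1}(n) = n$ and $\pi'^{-1}(n-1) = \pi^{-1}(n)$; the recipe then processes $\pi'$ via Case A and sets $f_n(\pi) = c \cdot f_n(\pi')$ where $c = (n-k, n-k+1, \ldots, n)$. I will track the position of $n$ through this composition using the identity $(c \cdot \rho)^{-1}(j) = \rho^{-1}(c^{-1}(j))$; since $c^{-1}(n) = n-1$, the problem collapses to showing that $f_n(\pi')^{-1}(n-1) = \pi^{-1}(n)$. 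Because Case A reinserts $n$ at the last position without disturbing smaller values, this in turn reduces to the statement that $f_{n-1}$ preserves the position of $n-1$ in the permutation $\tau \in \S_{n-1}(321)$ obtained by deleting $n$ from $\pi'$---which is exactly the inductive hypothesis applied to $\tau$, whose largest value is $n-1$. Chaining these reductions closes the induction.
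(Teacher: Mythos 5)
Your proof is correct and follows essentially the same route as the paper's: induction on $n$, with Cases A and B immediate since the reinserted $n$ is untouched by the transposition, and Case C handled by noting that the cycle $(n-k,\ldots,n-1,n)$ sends $n-1$ to $n$, so the problem reduces to the inductive hypothesis for the position of $n-1$ in $\pi_{n-1}$. (Your side remark justifying $k\le n-2$ in Case B is slightly imprecise --- when $k=n-1$ the size-two block need not be last, but then $n$ sits in the last position and one is in Case A --- though the conclusion, and the lemma, are unaffected.)
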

\begin{proof}
By induction on $n$. The case $n=1$ is clear. Assume the validity of the claim for $n-1$, and let $\pi \in Bl_{n,k}$. 

If $\pi^{-1}(n-1) < \pi^{-1}(n)$ (cases A and B) then the claim holds by Definition~\ref{def:recursion}, since $n$ is inserted back into the position from which it was deleted and the multiplication by the transposition $(n-k-1,n-k)$ does not affect it.
Note that in case A (and also in case B for $k \ge 2$) we also have $f_n(\pi)^{-1}(n-1) = \pi^{-1}(n-1)$, by the induction hypothesis.

Otherwise $\pi^{-1}(n) < \pi^{-1}(n-1) = n$ (case C). 
We pass from $\pi_n = \pi$ to $\pi_{n-1}$ by interchanging the positions of $n$ and $n-1$ and deleting $n$, so that $\pi^{-1}(n) = \pi_{n-1}^{-1}(n-1)$. By the induction hypothesis we therefore have $f_{n-1}(\pi_{n-1})^{-1}(n-1) = \pi^{-1}(n)$, and after inserting $n$ at the end and multiplying on the left by the cycle $(n-k, \ldots, n-1, n)$ we get $f_n(\pi)^{-1}(n) = \pi^{-1}(n)$. 

\end{proof}

\begin{lemma}\label{t:Bl_to_L}
If $\pi \in Bl_{n,k}$ then $f_n(\pi) \in L_{n,n-k}$; in particular, $f_n(\pi) \in \S_n(321)$. 
\end{lemma}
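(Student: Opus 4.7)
The proof will go by induction on $n$, with the trivial base case $n=1$ (where $\pi=1$, $k=1$, and $f_1(\pi)=1$ satisfies $\ldes(1^{-1})=0=1-1$). For the inductive step one distinguishes the three cases of Definition~\ref{def:recursion}, in each case using Lemma~\ref{t:bl} to locate $\pi_{n-1}$ in the appropriate $Bl_{n-1,k'}$ and applying the inductive hypothesis to conclude $\ldes(f_{n-1}(\pi_{n-1})^{-1})=n-1-k'$ and $f_{n-1}(\pi_{n-1})\in \S_{n-1}(321)$. Two things must then be verified at level $n$: (i) $\ldes(f_n(\pi)^{-1})=n-k$, and (ii) $f_n(\pi)$ is $321$-avoiding.

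Case A is the easy one. Here $k'=k-1$, so by induction $\ldes(f_{n-1}(\pi_{n-1})^{-1})=n-k$. Since $f_n(\pi)$ is obtained by appending $n$ at the last position, one has $f_n(\pi)^{-1}(n)=n$, the first $n-1$ entries of $f_n(\pi)^{-1}$ coincide with $f_{n-1}(\pi_{n-1})^{-1}$, and position $n-1$ cannot be a descent of $f_n(\pi)^{-1}$. Thus $\ldes(f_n(\pi)^{-1})=n-k$ and $321$-avoidance is inherited since appending the largest letter at the end cannot produce a $321$ pattern.

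In Case B, $k'=k$ and $\ldes(f_{n-1}(\pi_{n-1})^{-1})=n-k-1$. Let $\sigma$ be the permutation obtained by inserting $n$ into $f_{n-1}(\pi_{n-1})$ at position $p:=\pi^{-1}(n)$, so that $f_n(\pi)=(n-k-1,n-k)\,\sigma$. Using Lemma~\ref{t:keep_n} on $\pi_{n-1}$, the letter $n-1$ in $f_{n-1}(\pi_{n-1})$ sits at position $\pi^{-1}(n-1)<p$, hence $n-1$ precedes $n$ in $\sigma$ and $\Des(\sigma^{-1})=\Des(f_{n-1}(\pi_{n-1})^{-1})$, giving $\ldes(\sigma^{-1})=n-k-1$. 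Left-multiplication by $(n-k-1,n-k)$ swaps the entries at positions $n-k-1,\,n-k$ in $\sigma^{-1}$. One then shows that this swap destroys the descent at $n-k-1$ and creates a new last descent precisely at position $n-k$. The crucial inequality is $\sigma^{-1}(n-k-1)>\sigma^{-1}(n-k+1)$; this follows because the sub-word of $\sigma$ on the letters $n-k-1,n-k,\dots,n-1$ is, by $\ldes(\sigma^{-1})=n-k-1$, increasing after position $n-k-1$, so $\sigma^{-1}(n-k-1)$ is the largest among $\sigma^{-1}(n-k-1),\ldots,\sigma^{-1}(n-1)$. The same information, together with $321$-avoidance of $\sigma$ and explicit control on the position $p$ of $n$, yields $321$-avoidance of $f_n(\pi)$: a forbidden pattern would have to involve the swapped letters $n-k-1,n-k$, which is ruled out by the above inequalities.

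Case C is handled analogously by reduction to Case A. One sets $\pi':=(n-1,n)\pi$ and notes $\bl(\pi')=k+1$ with $\pi'$ falling into Case A, so the already-proved Case A of the inductive step gives $f_n(\pi')\in \S_n(321)$ and $\ldes(f_n(\pi')^{-1})=n-(k+1)=n-k-1$. Then $f_n(\pi)=(n-k,n-k+1,\dots,n)\,f_n(\pi')$, and one tracks what this cyclic left-multiplication does to $f_n(\pi')^{-1}$: the positions of the letters $n-k,\dots,n$ are cyclically permuted, the descent at $n-k-1$ is destroyed, a new descent appears at $n-k$, and no new descent appears beyond $n-k$; the verification uses the same structural facts about the tail of $f_n(\pi')^{-1}$ as in Case B, combined with Lemma~\ref{t:keep_n}. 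A parallel check confirms $321$-avoidance.

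The main obstacle is the delicate descent bookkeeping in Cases B and C: showing that left-multiplication by the prescribed transposition (resp.\ cycle) both annihilates the previous last descent of the inverse and installs a new one at exactly the position $n-k$, with nothing strictly to its right. This hinges on the fact that, by the inductive statement $\ldes(f_{n-1}(\pi_{n-1})^{-1})=n-1-k$, the values $n-k-1,n-k,\ldots,n-1$ occupy a prescribed monotone configuration in $f_{n-1}(\pi_{n-1})$, and $321$-avoidance forces the required tight inequalities between their positions; once this structural picture is pinned down, both $\ldes$ and $321$-avoidance follow.
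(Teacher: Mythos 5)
Your overall strategy -- induction on $n$, splitting into the three cases of Definition~\ref{def:recursion}, using Lemma~\ref{t:bl} to place $\pi_{n-1}$ and Lemma~\ref{t:keep_n} to track the letter $n$ -- is exactly the paper's, and your Cases A and C are sound. However, there is a genuine gap in Case B, at precisely the point you label as crucial. You claim that $\sigma^{-1}(n-k-1)>\sigma^{-1}(n-k+1)$ ``follows because \dots by $\ldes(\sigma^{-1})=n-k-1$, [$\sigma^{-1}$ is] increasing after position $n-k-1$, so $\sigma^{-1}(n-k-1)$ is the largest among $\sigma^{-1}(n-k-1),\ldots,\sigma^{-1}(n-1)$.'' This inference is invalid: the hypothesis $\ldes(\sigma^{-1})=n-k-1$ gives only $\sigma^{-1}(n-k-1)>\sigma^{-1}(n-k)$ together with $\sigma^{-1}(n-k)<\sigma^{-1}(n-k+1)<\cdots$, and this is perfectly consistent with the letter $n-k-1$ sitting \emph{between} $n-k$ and $n-k+1$ in the word (relative order $n-k,\,n-k-1,\,n-k+1,\ldots,n-1$), in which case $\sigma^{-1}(n-k-1)<\sigma^{-1}(n-k+1)$ and your swap would fail to create a descent at position $n-k$. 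Nothing in the $\ldes$ statistic alone rules this out.

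To close the gap you need the two extra ingredients the paper uses. First, $321$-avoidance of $f_{n-1}(\pi_{n-1})$: since $n-k$ precedes $n-k-1$, any letter smaller than $n-k-1$ appearing after $n-k-1$ would complete a $321$ pattern, so every letter after $n-k-1$ lies in $\{n-k+1,\ldots,n-1\}$; as these occur in increasing order, the last letter of $f_{n-1}(\pi_{n-1})$ must be either $n-1$ or $n-k-1$. Second, the Case~B hypothesis $\pi^{-1}(n-1)<\pi^{-1}(n)<n$ together with Lemma~\ref{t:keep_n} shows $n-1$ is \emph{not} the last letter, so $n-k-1$ is, i.e.\ $\sigma^{-1}(n-k-1)=n$ is maximal and your crucial inequality holds. (You do invoke ``$321$-avoidance of $\sigma$ and control on $p$,'' but only for the $321$-avoidance of $f_n(\pi)$, not for this inequality.) A minor further remark: Case~C does not actually require the delicate tail analysis you import from Case~B, since there the cycle places $n-k$ at the very last position, making the new descent at $n-k$ automatic.
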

\begin{proof}
There are actually two claims to prove here: $f_n(\pi) \in \S_n(321)$ and $\ldes(f_n(\pi)^{-1}) = n-k$. We will prove them simultaneously by induction on $n$. The cases $n=1, 2$ are obvious.
Now assume that the claim holds for $n-1$, and let $\pi \in Bl_{n,k}$. 
Consider each of the three cases of Definition~\ref{def:recursion}.

\begin{description}
\item[{\bf Case A}]
$\pi^{-1}(n)=n$.
\\
In this case we have $\pi_{n-1}\in Bl_{n-1,k-1}$ by Lemma~\ref{t:bl}, so by the induction hypothesis 
$f_{n-1}(\pi_{n-1}) \in L_{n-1,n-k}$. Since $f_n(\pi)$ is obtained from $f_{n-1}(\pi_{n-1})$ by inserting $n$ at the last position, we clearly have $f_n(\pi) \in L_{n,n-k}$. 

\item[{\bf Case B}] 
$\pi^{-1}(n-1)< \pi^{-1}(n) < n$. 
\\
In this case, by Lemma~\ref{t:bl}, $\pi_{n-1}\in Bl_{n-1,k}$, so by the induction hypothesis $f_{n-1}(\pi_{n-1}) \in L_{n-1,n-k-1}$. 
Here $f_n(\pi)$ is obtained from $f_{n-1}(\pi_{n-1})$ by inserting $n$ back into its original position and multiplying the permutation on the left by $(n-k-1,n-k)$.
Note that necessarily $k < n-1$, since $k = n$ only for the identity permutation and $k = n-1$ only for adjacent transpositions $\pi = (i,i+1)$ ($1 \le i \le n-1$), which do not fit case B.

Let us first show that $\ldes(f_n(\pi)^{-1}) = n-k$.
Since $n-k-1$ is the last descent of $f_{n-1}(\pi_{n-1})^{-1}$, $n-k$ precedes $n-k-1$ in $f_{n-1}(\pi_{n-1})$ and the letters $n-k, n-k+1, \ldots, n-1$ appear in $f_{n-1}(\pi_{n-1})$ in increasing order.
No letter smaller than $n-k-1$ can appear after $n-k-1$, because $f_{n-1}(\pi_{n-1})$ is $321$-avoiding. Therefore the last letter of $f_{n-1}(\pi_{n-1})$ can be either $n-1$ or $n-k-1$.
In fact, the assumption $\pi^{-1}(n-1) < \pi^{-1}(n) < n$ implies that $n-1$ is not the last letter of $\pi_{n-1}$, and therefore (by Lemma~\ref{t:keep_n}) not the last letter of $f_{n-1}(\pi_{n-1})$; this last letter must therefore be $n-k-1$. Inserting $n$ (to the right of $n-1$ but not at the last position) yields the relative order $n-k,n-k+1,\ldots,n,n-k-1$ for these letters, and multiplying on the left by $(n-k-1,n-k)$ gives $n-k-1,n-k+1,\ldots,n,n-k$. It follows that $\ldes(f_n(\pi)^{-1}) = n-k$, as claimed.

The proof of $321$-avoidance is quite easy: $f_{n-1}(\pi_{n-1})$ is $321$-avoiding by the induction hypothesis; inserting $n$ to the right of $n-1$ does not violate this property, since any violation involving $n$ was already there with $n-1$; and multiplying by $(n-k-1,n-k)$, putting these letters in an increasing order, is certainly good for $321$-avoidance. 

\item[{\bf Case C}] 
$\pi^{-1}(n) < \pi^{-1}(n-1) = n$.
\\
Again, by Lemma~\ref{t:bl},
$\pi_{n-1} \in Bl_{n-1,k}$; and therefore, by the induction hypothesis, $f_{n-1}(\pi_{n-1}) \in L_{n-1,n-k-1}$.
Here $f_n(\pi)$ is obtained from $f_{n-1}(\pi_{n-1})$ by inserting $n$ at the end and multiplying the permutation on the left by $(n-k, n-k+1, \ldots, n)$.
Unlike the previous case, here it {\em is} possible to have $k = n-1$, but only for the transposition $\pi = (n-1,n)$. In that case $\pi_{n-1}$ is the identity permutation, and so is $f_{n-1}(\pi_{n-1})$, leading to $f_n(\pi) = 23 \ldots n 1 \in L_{n,1}$. Having dealt with this case, we shall assume that $k < n-1$.

The proof that $\ldes(f_n(\pi)^{-1}) = n-k$ is quite simple here:
Since $n-k-1$ is the last descent of $f_{n-1}(\pi_{n-1})^{-1}$, 
the letters $n-k, n-k+1, \ldots, n-1$ appear in $f_{n-1}(\pi_{n-1})$ in increasing order.
Inserting $n$ at the end and multiplying the permutation on the left by the cycle $(n-k, n-k+1, \ldots, n)$ gives the order $n-k+1, \ldots, n, n-k$ for the letters $\ge n-k$, and it follows that $\ldes(f_n(\pi)^{-1}) = n-k$.
We note, for later use, that $n-k$ precedes $n-k-1$ in $f_{n-1}(\pi_{n-1})$ (descent of the inverse) and therefore $n-k+1$ precedes $n-k-1$ in $f_n(\pi)$.

Finally, we show that $f_n(\pi)$ is $321$-avoiding: $f_{n-1}(\pi_{n-1})$ is $321$-avoiding by the induction hypothesis, and inserting $n$ at the end preserves this property.
Replacing the order $n-k, n-k+1, \ldots, n$ by the order $n-k+1, \ldots, n, n-k$ can create a $321$-subsequence only if $n-k$ is involved, and this is impossible since $n-k$ is now the last letter and all the larger letters are in increasing order. 

\end{description}

\end{proof}

\begin{lemma}\label{t:bijective}
For each $1 \le k \le n$, the function $f_n$ maps $Bl_{n,k}$ bijectively onto $L_{n,n-k}$.
\end{lemma}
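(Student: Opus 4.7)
The plan is to leverage the work already done. Lemma~\ref{t:Bl_to_L} gives $f_n(Bl_{n,k}) \subseteq L_{n,n-k}$, and Corollary~\ref{t:cardinalities} gives $|Bl_{n,k}| = |L_{n,n-k}|$; hence it suffices to prove that $f_n$ is injective on $Bl_{n,k}$. I would establish injectivity by induction on $n$, the base $n=1$ being trivial.

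For the inductive step, suppose $\pi,\pi' \in Bl_{n,k}$ satisfy $f_n(\pi) = f_n(\pi') =: \sigma$. By Lemma~\ref{t:keep_n}, $\pi^{-1}(n) = (\pi')^{-1}(n) = \sigma^{-1}(n)$, so $n$ occupies the same position in $\pi$ and $\pi'$. If that position is $n$, both permutations are in Case A of Definition~\ref{def:recursion}; deleting $n$ (with Lemma~\ref{t:bl} placing the result in $Bl_{n-1,k-1}$) and invoking the induction hypothesis forces $\pi_{n-1} = (\pi')_{n-1}$, hence $\pi = \pi'$.

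The principal obstacle is the subcase $\sigma^{-1}(n) < n$, where $\pi$ and $\pi'$ each fall in either Case B or Case C and $\sigma$ alone must determine which. In both cases $f_{n-1}(\pi_{n-1})$ lies in $L_{n-1,n-k-1}$, and the analysis already carried out in the proof of Lemma~\ref{t:Bl_to_L} shows that the last letter of $f_{n-1}(\pi_{n-1})$ equals $n-k-1$, while the letters $n-k,n-k+1,\dots,n-1$ appear in increasing order of position. Tracking these letters through the Case B operations (inserting $n$ and left-multiplying by the transposition $(n-k-1,n-k)$) yields $\sigma^{-1}(n-k-1) < \sigma^{-1}(n-k+1)$; tracking them through the Case C operations (appending $n$ and left-multiplying by the cycle $(n-k,\dots,n)$) places $n-k-1$ at position $n-1$, so $\sigma^{-1}(n-k-1) = n-1 > \sigma^{-1}(n-k+1)$. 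Thus the relative order of $n-k-1$ and $n-k+1$ in $\sigma$ distinguishes the two cases. The single degenerate instance $k = n-1$, for which Case B is forbidden (as observed in the proof of Lemma~\ref{t:Bl_to_L}) and Case C produces only the cyclic permutation $23\cdots n\,1$, is handled by direct inspection.

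Once the case is identified from $\sigma$, the construction of $f_n$ is reversible: one undoes the left-multiplication and removes $n$, recovering $f_{n-1}(\pi_{n-1}) = f_{n-1}((\pi')_{n-1})$; the induction hypothesis then gives $\pi_{n-1} = (\pi')_{n-1}$, and rebuilding $\pi$ from $\pi_{n-1}$ according to the identified case (reinserting $n$ at the known position in Case B, or appending $n$ and applying the transposition $(n-1,n)$ in Case C) yields $\pi = \pi'$. The crux of the argument is thus the case-distinguishing criterion, which rests on the rigidity of $321$-avoiding permutations with a prescribed last descent of the inverse.
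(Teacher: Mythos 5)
Your overall strategy coincides with the paper's: prove injectivity by induction on $n$, with the crux being the recovery of the case (A, B or C) from $\sigma = f_n(\pi)$ alone, and with exactly the same distinguishing criterion (case A iff $\sigma^{-1}(n)=n$ by Lemma~\ref{t:keep_n}; otherwise case B iff $n-k-1$ precedes $n-k+1$ in $\sigma$, with $k=n-1$ handled separately). The only structural difference is how you pass from injectivity to bijectivity: you invoke the enumerative identity $|Bl_{n,k}|=|L_{n,n-k}|$ of Corollary~\ref{t:cardinalities}, whereas the paper observes that the sets $Bl_{n,1},\dots,Bl_{n,n}$ partition the domain $\S_n(321)$ and the sets $L_{n,n-1},\dots,L_{n,0}$ partition the codomain, so that injectivity of the self-map $f_n$ of the finite set $\S_n(321)$ already forces each $Bl_{n,k}$ to map onto $L_{n,n-k}$. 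Both routes are legitimate; the paper's is marginally more self-contained since it does not lean on the generating-function count.

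There is, however, one incorrect intermediate claim in your case-C analysis. You assert that in both cases B and C the last letter of $f_{n-1}(\pi_{n-1})$ is $n-k-1$, and hence that the case-C operations place $n-k-1$ at position $n-1$ of $\sigma$. The proof of Lemma~\ref{t:Bl_to_L} establishes ``last letter $=n-k-1$'' only in case B, where it uses the hypothesis $\pi^{-1}(n-1)<\pi^{-1}(n)<n$ to rule out $n-1$ as the last letter; in case C the last letter of $f_{n-1}(\pi_{n-1})$ can perfectly well be $n-1$. Concretely, take $\pi=2143\in Bl_{4,2}$ (case C): then $\pi_3=213$, $f_3(\pi_3)=213$ ends in $3=n-1$, and $\sigma=f_4(\pi)=3142$ has the letter $n-k-1=1$ at position $2\neq n-1=3$. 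Fortunately your criterion survives: the correct justification, recorded at the end of case C in the proof of Lemma~\ref{t:Bl_to_L}, is that $n-k-1\in\Des(f_{n-1}(\pi_{n-1})^{-1})$ forces $n-k$ to precede $n-k-1$ in $f_{n-1}(\pi_{n-1})$, and the relabeling by the cycle $(n-k,\dots,n)$ turns that letter $n-k$ into $n-k+1$ without moving positions, so $n-k+1$ precedes $n-k-1$ in $\sigma$ (in the example, $3$ precedes $1$ in $3142$). Replace your positional claim by this argument and the proof is complete.
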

\begin{proof}
By Lemma~\ref{t:Bl_to_L}, $f_n$ maps $Bl_{n,k}$ into $L_{n,n-k}$. Since $\S_n(321)$ is the disjoint union of the sets $Bl_{n,1}, \ldots, Bl_{n,n}$, as well as the disjoint union of $L_{n,n-1}, \ldots, L_{n,0}$, it suffices to prove that $f_n : \S_n(321) \to \S_n(321)$ is injective (and therefore bijective).
This, in turn, can clearly be proved by induction on $n$, using Definition~\ref{def:recursion}, once we show how to recover the case of $\pi$ (A, B or C) from the permutation $\sigma := f_n(\pi)$ alone.

Indeed, Lemma~\ref{t:keep_n} shows that case A occurs if and only if $\sigma^{-1}(n) = n$. 
A careful inspection of the proof of Lemma~\ref{t:Bl_to_L} shows that, if $\sigma^{-1}(n) < n$, case B occurs when $n-k-1$ precedes $n-k+1$ in $\sigma$,
whereas case C occurs when either $k = n-1$ or $n-k+1$ precedes $n-k-1$ in $\sigma$. Note that $k$ can be recovered from $\sigma$ since $\ldes(\sigma^{-1}) = n-k$.

\end{proof}

\begin{lemma}\label{preserve ltr}
Let $\pi \in Bl_{n,k}$. Then $\Ltr(\pi) = \Ltr(f_n(\pi))$
\end{lemma}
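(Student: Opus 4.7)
The plan is to prove the lemma by induction on $n$, following the three cases of Definition~\ref{def:recursion}; the base case $n = 1$ is immediate.

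In Case A (where $\pi^{-1}(n) = n$) the argument is direct: $\pi_{n-1}$ is obtained from $\pi$ by removing its trailing $n$, $f_n(\pi)$ is obtained from $f_{n-1}(\pi_{n-1})$ by appending $n$, and appending or removing a trailing maximum adjusts $\Ltr$ by exactly $\{n\}$. The inductive hypothesis then yields $\Ltr(f_n(\pi)) = \Ltr(\pi)$ at once.

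For Cases B and C, set $p := \pi^{-1}(n)$. Since $\pi$ is $321$-avoiding, every value to the right of $p$ is at most $n-2$, which forces $\Ltr(\pi) \subseteq [p]$ and $p \in \Ltr(\pi)$. A short direct computation yields $\Ltr(\pi_{n-1}) = \Ltr(\pi) \setminus \{p\}$ in Case B and $\Ltr(\pi_{n-1}) = \Ltr(\pi)$ in Case C. Applying the inductive hypothesis to $\pi_{n-1}$ and then re-introducing $n$---by direct insertion at position $p$ (Case B) or by the Case A step applied to $\pi'$ (Case C)---produces an intermediate permutation $\tau$ with $\Ltr(\tau) = \Ltr(\pi)$ (Case B) or $\Ltr(\tau) = \Ltr(\pi) \cup \{n\}$ (Case C), again by elementary reasoning about trailing and inserted maxima.

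The main obstacle is the final step: verifying that left-multiplication by the transposition $(n-k-1, n-k)$ (Case B) or the cycle $(n-k, n-k+1, \ldots, n)$ (Case C) moves $\Ltr$ correctly (no change in Case B; removal of the single element $n$ in Case C). For this I will use the structural information established in the proof of Lemma~\ref{t:Bl_to_L}: in $\tau$ the values $n-k, n-k+1, \ldots, n-1$ occupy strictly increasing positions $r_0 < r_1 < \cdots < r_{k-1}$, all less than $p$; value $n$ sits at position $p$; and the last position of $\tau$ holds the value $n-k-1$ (Case B) or $n$ (Case C). In particular, all values at positions strictly less than $r_0$ lie in $[n-k-2]$. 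A position-by-position analysis then shows: (i) at each directly-affected position (the $r_j$'s and the last position), the LTR-max status changes in exactly the required way, because the relevant value comparisons involve values that differ by at most one and the previously-admissible ranges are tightly controlled; and (ii) at every other position $i$, any would-be discrepancy in LTR-max status would force the unchanged value $\tau(i)$ to coincide with one of the pinned values $n-k-1, n-k, \ldots, n$, contradicting the fact that $i$ lies outside the set of ``swap'' positions. The delicate bookkeeping in this last case analysis is the most technical aspect of the proof.
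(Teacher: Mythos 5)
Your proposal is correct and follows essentially the same route as the paper's proof: induction on $n$ through the three cases of Definition~\ref{def:recursion}, tracking $\Ltr$ through the deletion of $n$, the recursive call, the re-insertion, and finally the left-multiplication, using the positional information about the letters $n-k-1,n-k,\ldots,n$ established in the proof of Lemma~\ref{t:Bl_to_L}. Your final ``position-by-position'' verification is in fact more detailed than the paper's one-line assertion that the reordering preserves the set of left-to-right maxima; the only blemish is a minor notational slip in Case~C, where $n$ sits at the last position of the intermediate permutation rather than at $p=\pi^{-1}(n)$, which you implicitly correct later and which does not affect the argument.
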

\begin{proof}
Again, the proof proceeds by induction on $n$, the cases $n = 1, 2$ being obvious. 
Assume that the claim holds for $n-1$, let $\pi \in Bl_{n,k}$, and consider the three cases of Definition~\ref{def:recursion}.

\begin{description}
\item[{\bf Case A}]
$\pi^{-1}(n)=n$.
\\
By the induction hypothesis, $\Ltr(\pi_{n-1}) = \Ltr(f_{n-1}(\pi_{n-1})) = S$, say.
$f_n(\pi)$ is obtained from $f_{n-1}(\pi_{n-1})$ by inserting $n$ back at the end, yielding
\[
\Ltr(f_n(\pi)) 
= S \cup \{n\}
= \Ltr(\pi).
\]

\item[{\bf Case B}] 
$\pi^{-1}(n-1)< \pi^{-1}(n) < n$.
\\
Denote $i := \pi^{-1}(n)$. The permutation $\pi_{n-1}$ is obtained by deleting $n$ from $\pi$, so that $\Ltr(\pi) = (\Ltr(\pi_{n-1}) \cap [1,i-1]) \cup \{i\}$.
By the induction hypothesis  $\Ltr(\pi_{n-1}) = \Ltr(f(\pi_{n-1}))$, so that inserting $n$ back into its original position recovers $\Ltr(\pi)$.
Finally, by the proof of Lemma~\ref{t:Bl_to_L},
multiplying on the left by $(n-k-1,n-k)$ replaces the order $n-k, n-k+1, \ldots, n, n-k-1$ by the order $n-k-1, n-k+1, \ldots, n, n-k$, again preserving the set of left-to-right-maxima.

\item[{\bf Case C}] 
$\pi^{-1}(n) < \pi^{-1}(n-1) = n$.
\\
Here $\pi_{n-1}$ is obtained by deleting $n$ from $(n-1,n)\pi$, and therefore $\Ltr(\pi_{n-1}) = \Ltr(\pi_n) \subseteq [n-1]$. By the induction hypothesis, this is also $\Ltr(f_{n-1}(\pi_{n-1}))$.
Inserting $n$ at the end of the permutation adds the element $n$ to this set.
Finally, by the proof of Lemma~\ref{t:Bl_to_L},
multiplying on the left by $(n-k, n-k+1,\ldots,n)$ replaces the order $n-k, n-k+1, \ldots, n$ by the order $n-k+1, \ldots, n, n-k$, removing the element $n$ and setting $\Ltr(f_n(\pi)) = \Ltr(\pi)$, as claimed.

\end{description}

\end{proof}



The proof of Theorem~\ref{t:bijection} is now complete.

\qed

\medskip

Theorem \ref{main1_introduction} follows from Theorem \ref{t:bijection}, and implies in turn

\begin{corollary}\label{t:des_equid}
For every positive integer $n$,
\[
\sum_{\pi \in \S_n(321)} {\bf x}^{\Des(\pi)} t^{\pi^{-1}(n)} q^{\bl(\pi)} 
= \sum_{\pi \in \S_n(321)} {\bf x}^{\Des(\pi)} t^{\pi^{-1}(n)}  q^{n-\ldes(\pi^{-1})}.
\]
\end{corollary}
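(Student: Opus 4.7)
The plan is to derive Corollary~\ref{t:des_equid} directly from the bijection $f_n : \S_n(321) \to \S_n(321)$ constructed in Theorem~\ref{t:bijection}, by verifying that it preserves the three statistics appearing in the corollary, beyond just the pair $(\Ltr, \bl)$.

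First, I would observe that, by Theorem~\ref{t:bijection}, $f_n$ maps $Bl_{n,k}$ bijectively onto $L_{n,n-k}$, so that $\bl(\pi) = n - \ldes(f_n(\pi)^{-1})$ for every $\pi \in \S_n(321)$. Thus the weight $q^{\bl(\pi)}$ on the left equals $q^{n - \ldes(f_n(\pi)^{-1})}$ on the right. It therefore suffices to show that $f_n$ preserves both $\Des(\pi)$ and $\pi^{-1}(n)$.

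For the descent set, I would invoke Lemma~\ref{preserve ltr}, which gives $\Ltr(\pi) = \Ltr(f_n(\pi))$, together with Observation~\ref{obs2}, which states that, for a $321$-avoiding permutation, $\Des$ is determined by $\Ltr$ via the rule $i \in \Des(\pi) \iff i \in \Ltr(\pi)$ and $i+1 \notin \Ltr(\pi)$. Since $f_n(\pi) \in \S_n(321)$ by Theorem~\ref{t:bijection}, applying Observation~\ref{obs2} to both $\pi$ and $f_n(\pi)$ yields $\Des(\pi) = \Des(f_n(\pi))$.

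For the statistic $\pi^{-1}(n)$, I would quote Lemma~\ref{t:keep_n} directly: $f_n(\pi)^{-1}(n) = \pi^{-1}(n)$. Combining these three observations, the map $\pi \mapsto f_n(\pi)$ is a weight-preserving bijection that sends the monomial ${\bf x}^{\Des(\pi)} t^{\pi^{-1}(n)} q^{\bl(\pi)}$ to ${\bf x}^{\Des(f_n(\pi))} t^{f_n(\pi)^{-1}(n)} q^{n - \ldes(f_n(\pi)^{-1})}$, which proves the identity. No real obstacle is expected, since all ingredients have already been established; the only point that requires care is the logical chain through Observation~\ref{obs2} to transfer the identity $\Ltr(\pi) = \Ltr(f_n(\pi))$ into $\Des(\pi) = \Des(f_n(\pi))$, which only works because both permutations lie in $\S_n(321)$.
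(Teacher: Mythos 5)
Your proof is correct and follows essentially the same route as the paper: the paper deduces the corollary from Theorem~\ref{main1_introduction} by noting that, over $\S_n(321)$, both $\Des(\pi)$ (via Observation~\ref{obs2}) and $\pi^{-1}(n)$ (as the maximum of $\Ltr(\pi)$) are determined by $\Ltr(\pi)$. The only cosmetic difference is that you unwind the bijection $f_n$ and invoke Lemma~\ref{t:keep_n} directly for the statistic $\pi^{-1}(n)$, whereas the paper gets this for free from the preservation of $\Ltr$.
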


\begin{proof}
By Observation~\ref{obs2}, the set $\Ltr(\pi)$ determines
$\Des(\pi)$. In addition, $\pi^{-1}(n)$ is the maximal element of $\Ltr(\pi)$.
\end{proof}

\section{Schur-Positivity}\label{sec:Schur}

In this Section we prove Theorem~\ref{conj1} and conclude with an application.


\bigskip








\noindent
{\it Proof of Theorem~\ref{conj1}.}
Setting $t=1$ in Corollary~\ref{t:des_equid} gives
\[
\sum_{\pi \in \S_n(321)} {\bf x}^{\Des(\pi)} q^{\bl(\pi)} =
\sum_{\pi \in \S_n(321)} {\bf x}^{\Des(\pi)} q^{n
-\ldes(\pi^{-1})},
\]
and comparing the coefficients of $q^k$ on both sides gives
\[
\Q(Bl_{n,k})=\Q(L_{n,n-k}) \qquad (1 \le k \le n).
\]
It therefore suffices to prove the claim for $L_{n,n-k}$ instead
of $Bl_{n,k}$.

Note that a permutation $\pi$ is $321$-avoiding if and only if
$\text{height}(P_\pi) < 3$, in the terminology of
Subsection~\ref{subsec:stat}. The set
\begin{eqnarray*}
L_{n,n-k} &=& \{\pi \in \S_n(321) \,:\, \ldes(\pi^{-1}) = n-k\} \\
&=& \{\pi \in \S_n \,:\, \text{height}(P_\pi) < 3 \text{ and }
\ldes(P_\pi) = n-k\}
\end{eqnarray*}
is therefore a disjoint union of Knuth classes and thus, 
by Proposition~\ref{Knuth_positive}, Schur-positive.
By 
the above description of $L_{n,n-k}$, Fact~\ref{fact:RSK_DES} and
Proposition~\ref{G1},
\begin{eqnarray*}
\Q(L_{n,n-k}) &=& \sum_{\pi \in
L_{n,n-k}}{\mathcal{F}_{n,\Des(\pi)}}
= \sum_{\substack{\lambda \vdash n \\ \ell(\lambda) < 3}} \,\sum_{\substack{P \in \SYT(\lambda) \\ \ldes(P) = n-k}} \sum_{Q \in \SYT(\lambda)}{\mathcal{F}_{n,\Des(Q)}} \\
&=& \sum_{\substack{\lambda \vdash n \\ \ell(\lambda) < 3}} |\{P
\in \SYT(\lambda) \,:\, \ldes(P) = n-k\}| \, s_{\lambda}.
\end{eqnarray*}
Hence, for every $\lambda \vdash n$ and $1\le k\le n$, the coefficient of $s_\lambda$ in $\Q(L_{n,n-k})$ is
\[
\langle \Q(L_{n,n-k}), s_\lambda \rangle = \begin{cases}
|\{P\in \SYT(\lambda) \,:\, \ldes(P) = n-k\}|, & \text{if } \ell(\lambda) < 3;\\
0, & \text{otherwise}.
\end{cases}
\]

For $k = n$ we have $\ldes(P) = 0$ only for the unique tableau of shape $\lambda = (n)$, so that
\[
\langle \Q(L_{n,0}), s_\lambda \rangle 
= \delta_{\lambda, (n)} 
\]
and therefore
\[
\Q(L_{n,0}) = s_{(n)}
\]
as claimed. 

\medskip

We can therefore assume that $1 \le k \le n-1$, so that $n \le (n-1) + (n-k)$.
Assume first that $\lambda \vdash n$ is not contained in $(n-1,n-k)$.
By the Branching Rule (Theorem~\ref{BranchingRule}),
\[
\langle \chi^{(n-1,n-k)} \downarrow_{\S_n}^{\S_{2n-k-1}},
\chi^\lambda \rangle = 0.
\]
We shall show that in this case we also have 
\[
\langle \Q(L_{n,n-k}), s_\lambda \rangle = 0,
\]
Indeed, a shape $\lambda \vdash n$ not contained in $(n-1,n-k)$ but with $\ell(\lambda) < 3$ has the form $\lambda = (n-m,m)$ with either $m = 0$ or $m > n-k$. A standard Young tableau $P \in \SYT(\lambda)$ cannot have $\ldes(P) = n-k$, since if $m = 0$ then $\ldes(P) = 0$ whereas if $m > n-k$ then $P$ has a descent not smaller than the $m$-th entry in its first row, and this is greater than $n-k$.

\medskip

On the other hand, if $\lambda \vdash n$ is contained in $(n-1,n-k)$ then $\lambda = (n-m,m)$ with $1 \le m \le n-k$. In this case, by the Branching Rule,
\[
\langle \chi^{(n-1,n-k)} \downarrow_{\S_n}^{\S_{2n-k-1}},
\chi^{(n-m,m)}\rangle = |\SYT((n-1,n-k)/(n-m,m))|.
\]
Rotating the shape by $180^\circ$ within a $2 \times n$ box, i.e., applying the transformation $(a,b)/(c,d) \mapsto (n-d,n-c)/(n-b,n-a)$,
the RHS is seen to be equal to
\[
|\SYT((n-m,m)/(k,1))|.
\]
This, in turn, is equal to the number of SYT of shape $(n-m,m)$
with the smallest $k+1$ entries filling a $(k,1)$ shape in some specific order,
say with $1, \ldots, k$ in the first row and $k+1$ in the second. 
These are exactly the SYT of shape $(n-m,m)$ with first descent equal to $k$.
By Lemma~\ref{prop:first-last}, this number is equal to the number of
SYT of shape $(n-m,m)$ with last descent equal to $n-k$, namely to
\[
|\{P \in \SYT(\lambda) \,:\, \ldes(P) = n-k\}| = \langle
\Q(L_{n,n-k}), s_\lambda \rangle.
\]
Thus, in all cases,
\[
\Q(L_{n,n-k}) = \ch(\chi^{(n-1,n-k)}
\downarrow_{\S_n}^{\S_{2n-k-1}}),
\]
as claimed.

\qed


\bigskip

\begin{example}
Let $n=7$ and $k=3$. Then
\[
\langle \chi^{(7-1,7-3)} \downarrow_{\S_7}^{\S_{10}}, \chi^{(5,2)}\rangle =3,
\]
since there are
$3$ standard Young tableaux of skew shape 
\[
(6,4)/(5,2) =
\begin{ytableau}
\none & \none & \none & \none & \none & {} \\
\none & \none & {} & {} 
\end{ytableau}
\]
Rotation by $180^\circ$ within a $2 \times 7$ box gives the skew shape
\[
(5,2) / (3,1) =
\begin{ytableau}
\none & \none & \none & {} & \\
\none & {} 
\end{ytableau}
\]
SYT of this shape correspond bijectively to SYT of shape $(5,2)$ with first descent at $k=3$:
\[
\begin{ytableau}
 1 & 2 & 3 & 6 & 7  \\
 4 &  5 
\end{ytableau}
\, ,\, 
\begin{ytableau}
 1 & 2 & 3 & 5 & 7  \\
 4 &  6
\end{ytableau}
\, , \, 
\begin{ytableau}
 1 & 2 & 3 & 5 & 6  \\
 4 &  7
\end{ytableau}
\]
which, in turn, correspond to SYT of shape $(5,2)$ with last descent at 
$n-k=4$:
\[
\begin{ytableau}
 1 & 2 & 3 & 4 & 7  \\
 5 &  6 
\end{ytableau}
\, , \, 
\begin{ytableau}
 1 & 2 & 4 & 6 & 7  \\
 3 &  5
\end{ytableau}
\, , \, 
\begin{ytableau}
 1 & 3 & 4 & 6 & 7  \\
 2 &  5
\end{ytableau}
\, .
\]
\end{example}

\bigskip

\begin{corollary}\label{t:Bl_SYT}
For any $1 \le k \le n$,
\[
\sum_{\pi \in Bl_{n,k}} {\bf x}^{\Des(\pi)} = \sum_{T \in
\SYT(n-1,n-k)}{\bf x}^{\Des(T) \cap [n-1]}.
\]
\end{corollary}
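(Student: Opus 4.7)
The idea is to read off Corollary~\ref{t:Bl_SYT} from Theorem~\ref{conj1} by passing through the ring of fundamental quasi-symmetric functions. First I would expand $\Q(Bl_{n,k})$ in two ways. By definition,
\[
\Q(Bl_{n,k}) \;=\; \sum_{\pi \in Bl_{n,k}} \F_{n, \Des(\pi)}.
\]
On the other hand, for $1 \le k \le n-1$, Theorem~\ref{conj1} identifies $\Q(Bl_{n,k})$ with $\ch\bigl(\chi^{(n-1,n-k)} \downarrow_{\S_n}^{\S_{2n-k-1}}\bigr)$, and Corollary~\ref{quasi_restriction} in turn expands this restricted character as
\[
\ch\bigl(\chi^{(n-1,n-k)} \downarrow_{\S_n}^{\S_{2n-k-1}}\bigr) \;=\; \sum_{T \in \SYT(n-1,n-k)} \F_{n, \Des(T) \cap [n-1]}.
\]
Equating these two expressions for $\Q(Bl_{n,k})$ produces an identity between sums of fundamental quasi-symmetric functions.

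Next I would appeal to the linear independence of the family $\{\F_{n,D} : D \subseteq [n-1]\}$ in the ring of quasi-symmetric functions to equate coefficients on the two sides. This yields, for every subset $D \subseteq [n-1]$, the refined equinumeration
\[
|\{\pi \in Bl_{n,k} : \Des(\pi) = D\}| \;=\; |\{T \in \SYT(n-1,n-k) : \Des(T) \cap [n-1] = D\}|.
\]
Multiplying by ${\bf x}^D$ and summing over all $D \subseteq [n-1]$ then gives the asserted identity.

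Finally, the boundary case $k = n$ can be checked by hand: $Bl_{n,n}$ consists of the identity permutation alone, with empty descent set, while $\SYT(n-1,0)=\SYT(n-1)$ contains just the single one-row tableau $1\,2\cdots(n-1)$, again with empty descent set, so both sides collapse to ${\bf x}^{\varnothing} = 1$. There is no serious obstacle here: all the real work is already packed into Theorem~\ref{conj1} and Corollary~\ref{quasi_restriction}, and the only delicate point is invoking linear independence of the $\F_{n,D}$ to justify passing from an identity of quasi-symmetric functions to an equality of descent-set multisets.
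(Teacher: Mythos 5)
Your proposal is correct and follows essentially the same route as the paper: both proofs combine Theorem~\ref{conj1} with Corollary~\ref{quasi_restriction} to identify $\Q(Bl_{n,k})$ with $\sum_{T}\F_{n,\Des(T)\cap[n-1]}$, and then pass back to the monomial identity via the linear independence of the $\F_{n,D}$ (which the paper phrases as the equivalence under the map $x^D\mapsto\F_{n,D}$). Your explicit treatment of the boundary case $k=n$ matches the paper's observation that this case is exactly the $\Q(Bl_{n,n})=\F_{n,\varnothing}$ statement of Theorem~\ref{conj1}.
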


\begin{proof}
Mapping each monomial $x^D$ to the fundamental quasi-symmetric function $\F_{n,D}$, the claimed equality is equivalent to
\[
\Q(Bl_{n,k}) = \sum_{T \in \SYT(n-1,n-k)} \F_{n, \Des(T) \cap [n-1]}.
\]
For $k = n$, this means
\[
\Q(Bl_{n,n}) = \F_{n, \varnothing}
\]
which is part of Theorem~\ref{conj1}.

For $1 \le k \le n-1$, Corollary~\ref{quasi_restriction} implies that the RHS is equal to the Frobenius image of the restriction
\[
\chi^{(n-1,n-k)} \downarrow_{\S_n}^{\S_{2n-k-1}},
\]
and the claimed equality again follows from Theorem~\ref{conj1}.
\end{proof}

\section{Final remarks and open problems}

\subsection{Hilbert series} 
The quotient $P_n/I_n$ of the polynomial ring $P_n=\mathbb{Q}[x_1,\dots,x_n]$ by the ideal generated by quasi-symmetric functions without constant term 
was studied by Aval, Bergeron and Bergeron~\cite{ABB}, who determined its Hilbert series (with respect to the grading by total degree) in terms of statistics on Dyck paths.
An alternative  description follows from Corollary~\ref{t:des_equid}.

\begin{proposition}
The Hilbert series of the quotient $P_n/I_n$, graded by total degree, is equal to
\[
\sum_{\pi \in \S_n(321)} {q^{n-\bl(\pi)}}.
\]
\end{proposition}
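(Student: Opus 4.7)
The plan is to combine the Aval--Bergeron--Bergeron Hilbert-series formula with Corollary~\ref{t:des_equid}. First, I would invoke the ABB theorem, which expresses $\operatorname{Hilb}(P_n/I_n;q)$ as a generating function $\sum_{D} q^{s(D)}$ indexed by Dyck paths $D$ of semilength $n$, with a specific statistic $s$ (in their formulation, defined in terms of the positions of the touches of $D$ with the diagonal).

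Next, I would transfer the ABB formula to $\S_n(321)$ via a standard bijection between Dyck paths of semilength $n$ and $321$-avoiding permutations in $\S_n$ (for instance, Krattenthaler's bijection). The key combinatorial identification, which must be verified by direct inspection of the bijection, is that under this correspondence the ABB statistic $s$ becomes $\ldes(\pi^{-1})$ on the permutation side. The most natural route is to exploit the fact that the $\oplus$-decomposition of $\pi \in \S_n(321)$ into blocks corresponds, under the bijection, to the decomposition of the associated Dyck path into primitive arches (factors that touch the diagonal only at their endpoints); the horizontal positions of these returns encode the descent set of $\pi^{-1}$ and, in particular, its last descent.

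Finally, I would specialize Corollary~\ref{t:des_equid} to $t=1$ and $x_i=1$ for all $i$, obtaining
\[
\sum_{\pi \in \S_n(321)} q^{\bl(\pi)}
= \sum_{\pi \in \S_n(321)} q^{n - \ldes(\pi^{-1})}.
\]
Since this is an equality of polynomials whose both sides enumerate the same finite set, the statistics $\bl(\pi)$ and $n - \ldes(\pi^{-1})$ are equidistributed on $\S_n(321)$, and hence so are $n - \bl(\pi)$ and $\ldes(\pi^{-1})$. Combining with the previous paragraph,
\[
\operatorname{Hilb}(P_n/I_n;q)
= \sum_{\pi \in \S_n(321)} q^{\ldes(\pi^{-1})}
= \sum_{\pi \in \S_n(321)} q^{n - \bl(\pi)},
\]
which is the claim.

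The main obstacle is the identification in the second paragraph: matching ABB's Dyck-path statistic with $\ldes(\pi^{-1})$ under Krattenthaler's bijection. This is a bookkeeping step that depends on the precise formulation of ABB's statistic; once it is in place, the remainder of the argument is a one-line specialization of Corollary~\ref{t:des_equid} together with the trivial symmetry replacing $\bl$ by $n-\bl$ and $n-\ldes(\pi^{-1})$ by $\ldes(\pi^{-1})$.
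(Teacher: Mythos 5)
Your overall strategy --- reduce the claim to the identity $\operatorname{Hilb}(P_n/I_n;q)=\sum_{\pi \in \S_n(321)} q^{\ldes(\pi^{-1})}$ and then specialize Corollary~\ref{t:des_equid} at $t=1$, $x_i=1$ --- is the same as the paper's, and your final step is carried out correctly. The gap is in how you propose to establish that identity. You leave the matching of the Aval--Bergeron--Bergeron Dyck-path statistic with $\ldes(\pi^{-1})$ as an unverified ``bookkeeping step,'' and the route you sketch for it is not viable: under the standard bijections between $\S_n(321)$ and Dyck paths, the returns of the path to the diagonal correspond to the indices $i$ with $\pi([i])=[i]$, i.e., to the block decomposition of $\pi$, \emph{not} to $\Des(\pi^{-1})$. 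Concretely, $\pi=2413\in\S_4(321)$ is $\oplus$-irreducible, so its path has a single return, while $\Des(\pi^{-1})=\Des(3142)=\{1,3\}$ has two elements; the return positions cannot encode the inverse descent set. (Indeed, if they did, the equidistribution of $\bl$ and $n-\ldes(\pi^{-1})$ would be essentially immediate, whereas Theorem~\ref{main1_introduction} requires a genuine bijection.) Descents are read off from the local shape of the path (valleys/peaks), not from its returns, so this part of your argument would have to be redone.

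The paper sidesteps the Dyck-path bookkeeping entirely: it cites \cite[Cor.\ 6.2]{AR}, where the translation of the Aval--Bergeron--Bergeron Hilbert series to $321$-avoiding permutations has already been performed, giving $\operatorname{Hilb}(P_n/I_n;q)=\sum_{\pi\in\S_n(321)}q^{\ldes(\pi)}$ directly; it then uses the easy fact that $\S_n(321)$ is closed under $\pi\mapsto\pi^{-1}$ to replace $\ldes(\pi)$ by $\ldes(\pi^{-1})$, and finishes with Corollary~\ref{t:des_equid} exactly as you do. To repair your proof, either cite that result (noting that it is stated for $\ldes(\pi)$, so you still need the closure-under-inversion step), or actually carry out the identification of the ABB statistic with $\ldes$ under a specific bijection --- which is a real computation, not a formality.
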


\begin{proof}
By~\cite[Cor. 6.2]{AR}, the Hilbert series of $P_n/I_n$ is equal to 
\[
\sum\limits_{\pi \in \S_n(321)}{q^{\ldes(\pi)}}.
\] 
The set $\S_n(321)$ is closed under inversion, hence 
\[
\sum\limits_{\pi \in \S_n(321)}{q^{\ldes(\pi)}}=\sum\limits_{\pi \in \S_n(321)}{q^{\ldes(\pi^{-1})}}.
\]
Corollary~\ref{t:des_equid} now completes the proof.
\end{proof}


It is now desired to find two different bases for the graded ring  $P_n/I_n$, both indexed by $321$-avoiding permutations,
with total degree equal to the block-number and to the last-descent respectively. 
Determining a nicely behaved linear action of the symmetric group (or of the Temperley-Lieb algebra) on these bases may provide a representation theoretic proof to Corollary~\ref{t:des_equid}. 





\medskip

A natural goal is to generalize the above setting to other reflection groups. 
For type $B$, the framework presented in~\cite{AAER} may be useful.

\subsection{Pattern-statistic Schur-positive pairs} 

Sagan and Woo posed the 
problem of finding Schur-positive pattern-avoiding sets~\cite{Sagan_talk}.
A natural goal is to look further for Schur-positive statistics on pattern-avoiding sets.

\begin{definition}
Let 
$\stat:\S_n \longrightarrow \bbn\cup\{0\}$ be a permutation statistic and $\varnothing \ne \Pi\subseteq \S_m$ be a nonempty set of patterns.
The {\em pattern-statistic pair} $(\Pi,\stat)$ is {\em Schur-positive} if
\[
\Q(\{\pi \in \S_n(\Pi) \,:\, \stat(\pi) = k\})
\]
is Schur-positive for all integers $n \ge 1$ and $k \ge 0$. 
\end{definition}



By Proposition~\ref{Knuth_positive}, sets of permutations which are closed under Knuth relations are Schur-positive. It follows that
if \ $\S_n(\Pi)$ is closed under Knuth relations for every $n\ge 0$, and the statistic $\stat$ is invariant under these relations
then $(\Pi,\stat)$ is a Schur-positive pair. For example, letting $\ides(\pi):=|\Des(\pi^{-1})|$ be the inverse descent number and $e_m$ 
be the identitity permutation in $\S_m$,  the pair $(e_m, \ides)$  
is Schur-positive. For similar reasons, the pair $(\{132,312\},\imaj)$ is Schur-positive, where  
\[
\imaj(\pi) := \sum\limits_{i \in \Des(\pi^{-1})}i
\]
is the inverse major index.


An example of a different type was given in this paper: By Theorem~\ref{conj1}, $(\{321\},\bl)$
is a Schur-poitive pair. Note that the the block number is not invariant under Knuth relations.

\begin{problem}
Find other Schur-positive pattern-statistic pairs, which are not invariant under Knuth relations. 
\end{problem}


\bigskip

\noindent{\bf Acknowledgements:} 
The authors thank 
Tom Roby and Michael Joseph for stimulating discussions and helpful comments, and
 the anonymous referees for valuable remarks and suggestions.
The third author thanks Michael Albert and Mike Atkinson for motivating discussions and support.



\begin{thebibliography}{99}

\bibitem{AAER} R.\ M\ Adin, C.\ Athanasiadis, S.\ Elizalde and Y.\ Roichman,
{\em Character formulas and descents for the hyperoctahedral group},
Adv.\ Appl.\ Math.~{\bf 87} (2017), 128--169.

\bibitem{Adin-R} 
R.\ M\ Adin and Y.\ Roichman, 
{\em Matrices, characters and descents}, 
Linear Algebra Appl.~{\bf 469} (2015), 381--418.

\bibitem{AR} 
R.\ M.\ Adin and Y.\ Roichman, 
{\em Equidistribution and sign-balance on 321-avoiding permutations},
S\'em.\ Lothar.\ Combin.~{\bf 51} (2004), Article B51d, 14 pages.

\bibitem{Albert-Atkinson05} 
M.\ H.\ Albert and M.\ D.\ Atkinson, 
{\em Simple permutations and pattern restricted permutations}, 
Discrete Math.~{\bf 300} (2005), 1--15.

\bibitem{AAV} 
M.\ H.\ Albert, M.\ D.\ Atkinson and V.\ Vatter, 
{\em Subclasses of the separable permutations},
Bull.\ London Math.\ Soc.~{\bf 43} (2011), 859--870.

\bibitem{ABB}
J.-C.\ Aval, F.\ Bergeron and N.\ Bergeron,
{\em Ideals of quasi-symmetric functions and super-covariant polynomials for $S_n$},
Adv.\ Math.~{\bf 181} (2004), 353--367.


\bibitem{Bona2} 
M.\ B\'ona, 
Combinatorics of Permutations,
With a foreword by Richard Stanley,
Discrete Math.\ and its Applications,
Chapman \& Hall/CRC, Boca Raton, FL, 2004.

\bibitem{C} 
E.\ Catalan, 
{\em Sur les nombres de Segner}, 
Rend.\ Circ.\ Mat.\ Palermo~{\bf 1} (1887), 190--201.

\bibitem{CGG} 
S.\ Connolly, Z.\ Gabor and A.\ P.\ Godbole 
{\em The location of the first ascent in a $123$-avoiding permutation},
Integers~{\bf 15} (2015), Paper A13, 8 pages. 

\bibitem{ER} 
S.\ Elizalde and Y.\ Roichman, 
{\em Schur-positive sets of permutations via products and grid classes}, 
J.\ Algeb.\ Combin.~{\bf 45} (2017), 363--405.

\bibitem{Foata} D,\ Foata, 
{\em On the Netto inversion number of a sequence},
Proc.\ Amer.\ Math.\ Soc.~{\bf 19} (1968), 236--240.

\bibitem{FS} D.\ Foata and M.-P.\ Sch\"utzenberger, 
{\em Major index and inversion number of permutations}, 
Math.\ Nachr.~{\bf  83} (1978), 143--159.

\bibitem{Gessel} 
I.\ M.\ Gessel, 
{\em Multipartite $P$-partitions and inner products of Schur functions}, Contemp.\ Math.~{\bf 34} (1984), 289--302.

\bibitem{Gessel-Reutenauer} 
I.\ M.\ Gessel and C.\ Reutenauer,
{\em Counting permutations with given cycle structure and descent set}, 
J.\ Combin.\ Theory Ser.\ A~{\bf 64} (1993), 189--215.

\bibitem{James} 
G.\ D.\ James, 
The representation theory of the symmetric groups,
Lecture Notes in Math., no.~682, Springer, Berlin, 1978.

\bibitem{personal} 
M.\ Joseph and T.\ Roby, 
personal communication.


\bibitem{Reifegerste} 
A.\ Reifegerste,
{\em Refined sign-balance on 321-avoiding permutations},
European J.\ Combin.~{\bf 26} (2005), 1009--1018.

\bibitem{Rubey} 
M.\ Rubey, 
{\em An involution on Dyck paths that preserves the rise composition and interchanges the number of returns and the position of the first double fall },
preprint, 2017, {\tt arXiv:1701.07009}.

\bibitem{Sagan} 
B.\ E.\ Sagan, 
{\em The symmetric group: Representations, combinatorial algorithms, and symmetric functions}, 
Second edition, Graduate Texts in Math., no.~203, Springer-Verlag, New York, 2001.

\bibitem{Sagan_talk} 
B.\ E.\ Sagan, 
{\em Pattern avoidance and quasisymmetric functions}, 
The 13th International Permutation Patterns Conference, London, UK, 2015. \url{https://sites.google.com/site/pp2015london/slides}

\bibitem{OEIS} 
N.\ J.\ A.\ Sloane, 
The On-line Encyclopedia of Integer Sequences.

\bibitem{St2} 
R.\ P.\ Stanley, 
Enumerative Combinatorics, Vol.\ 2, 
Cambridge Studies in Adv.\ Math., no.~62, Cambridge Univ.\ Press, Cambridge, 1999.

\bibitem{Stanley_problems}
R.\ P.\ Stanley, 
{\em Positivity problems and conjectures in algebraic combinatorics}, 
in: Mathematics: Frontiers and Perspectives (V.\ Arnold, M.\ Atiyah, P.\ Lax, and B.\ Mazur, eds.), American Math.\ Society, Providence, RI, 2000, pp.\ 295--319.

\bibitem{St05} 
R.\ P.\ Stanley. 
{\em The descent set and connectivity set of a permutation}, 
J.\ Integer Sequences~{\bf 8} (2005), Art.\ 05.3.8, 9 pages.  

\bibitem{St} 
R.\ P.\ Stanley, 
Catalan Numbers,
Cambridge Univ.\ Press, Cambridge, 2015.

\bibitem{T} 
S.\ Tedford,
{\em Combinatorial interpretations of convolutions of the Catalan numbers}, 
Integers~{\bf 11} (2011), Paper A3, 10 pages. 

\end{thebibliography}
\end{document}